\def\A{{\sf A}}
\def\B{{\sf B}}
\def\C{{\sf C}}
\def\th{\theta}
\def\Th{\Theta}
\def\N{\mathbb{N}}
\def\E{{\mathcal E}}
\def\S{\mathcal S}
\def\r{{\rm r}}
\def\d{{\rm d}}
\def\bu{\bullet}
\def\di{\diamond}
\def\({\left(}
\def\[{\left[}
\def\){\right)}
\def\]{\right]}
\def\si{\sigma}
\def\Si{\Sigma}
\def\G{{\sf G}}
\def\<{\langle}
\def\>{\rangle}
\def\tr{\triangleright}
 \newtheorem{thm}{Theorem}[section]
 \newtheorem{lem}[thm]{Lemma}
 \newtheorem{prop}[thm]{Proposition}
 \theoremstyle{definition}
 \newtheorem{defn}[thm]{Definition}
 \theoremstyle{remark}
 \newtheorem{rem}[thm]{Remark}
 \newtheorem{ex}[thm]{Example}
 \numberwithin{equation}{section}
\numberwithin{equation}{section}
\begin{document}


\title{Reccurence Sets for Partial Inverse\\ Semigroup Actions and Related Structures}

\author{M. M\u antoiu}

\author{M. M\u antoiu
\footnote{
\textbf{2010 Mathematics Subject Classification:} Primary 20M18, 37B20, Secondary 22A22.
\newline
\textbf{Key Words:}  inverse semigroup, groupoid, action, dynamical system, orbit, recurrence. 
\newline
\textbf{Acknowledgements:} {The author has been supported by the Fondecyt Project 1200884. 
}}
}



\maketitle


\begin{abstract}
Two types of recurrence sets are introduced for inverse semigroup partial actions in topological spaces. We explore their connections with similar notions for related types of imperfect symmetries (prefix inverse semigroup expansions, partial group and groupoid actions).
\end{abstract}


\section{Introduction}\label{introduction}

In the theory of classical dynamical systems, a special role is played by {\it recurrence sets} (also called {\it dwelling sets}); this can be seen in any of the standard texts, as \cite{Au,dV} for example. Suppose that the group $\G$ acts continuously on the topological space $\Si$\,, the action being denoted by $\alpha$\,. For $M,N\subset\Si$ one defines 
\begin{equation}\label{indaia}
\G(\alpha)_M^N:=\big\{g\in\G\,\big\vert\,\alpha_g(M)\cap N\ne\emptyset\big\}\,.
\end{equation}
Such subsets of $\G$ are relevant for the behavior of the dynamical systems in many ways. As a basic example, for singletons $M=\{\si\}$ and $N=\{\tau\}$\,, the set $\G(\alpha)_\si^\tau\equiv\G(\alpha)_{\{\si\}}^{\{\tau\}}$ is non-void if and only if $\si$ and $\tau$ belong to the same orbit. This serves to characterize invariant subsets. Many other important properties may be defined or described in terms of recurrence sets; we mention only (topological) transitivity, limit sets, (non-)wandering points, periodic and almost periodic points, minimality, mixing, but there are others. The versatility of the notion comes both from the nature and the relative position of the sets $M,N$ and from the requirements on the size of the correspondent recurrent set.

\smallskip
To study more general and subtle notions of (local or partial) symmetry, in the last decades groups have been replaced by more general mathematical objects as groupoids or inverse semigroups. Even in the setting of groups, partial actions are an important way to encode imperfect symmetries. One important application (which has been our initial motivation) is towards the theory of $C^*$-algebras, but we will not refer to this in the present paper. 

\smallskip
Dynamical notions of such general type of actions have mostly been introduced in a non-systematic ad hoc manner. In \cite{FM} the topological dynamics of groupoid actions has been treated in a more consistent way, but only the basic theory has been developed. We are also interested in a similar project for inverse semigroup actions. In starting this project, we become aware of a certain intricacy of the topic and especially of the fact that well-known connections of the inverse semigroup actions with other mathematical notions may be a fruitful technical tool. Actually the present short article is devoted merely to explore these connections in the setting of recurrence, and the development of the theory itself is postponed to a further publication.

\smallskip
We are interested in triples $(\S,\th,\Si)$\,, where $\S$ is a (discrete) inverse semigroup, $\Si$ is a topological space and the action $\th$ is composed of a family of homeomorphisms $\{\th_s\!\mid\!s\in\S\}$ between open subsets of $\Si$\,, having suitable properties. The precise definition and the basic theory is presented in Section \ref{masinadecalcat}, but let us just note three peculiar features that are not present for groups: (a) The transformations $\th_s$ are not defined everywhere. (b) Two transformations $\th_s$ and $\th_t$ may act in the same way on an open subset of the intersection of their domains. (c) The composition $\th_s\circ\th_t$ has to be defined in a precise (maximal) way. We speak of a (genuine) action of $\S$ if this composition equals $\th_{st}$\,. Actually, to include the concept of partial group action, guided by \cite{BC}, we allow partial inverse semigroup actions, in which $\th_s\circ\th_t$ is only required to be a restriction of $\th_{st}$\,.

\smallskip  
By analogy to \eqref{indaia}, one can define recurrence sets of the form
\begin{equation}\label{pisica}
\S(\th)_M^N:=\big\{s\in\S\,\big\vert\,\th_s(M)\cap N\ne\emptyset\big\}\,.
\end{equation}
Since it is easier, and since sharp results will be obtained for this situation, let us restrict to the case when $M=\{\si\}$ is a one-point set. As a consequence of (a)\,, it is understood that only elements $s$ for which $\th_s$ is defined in $\si$ are candidates in \eqref{pisica}\,. On the other hand, because of (b), the set $\S(\th)_\si^N$ seems redundant. This could be important, since for some dynamical properties its size (infinite, complement of a finite set, syndetic) should play an important role. This is why, besides the "naive" set $\S(\th)_\si^N$ we also define in \eqref{reciurset} a quotient notion $\mathscr S(\th)_\si^N$, that is more suited to the developments we have in view in the future. Comparing these two types of recurrence sets with others, attached to related mathematical structures, is the main goal of the paper.

\smallskip
In \cite{Ex0}, to each group $\G$ one attaches an inverse semigroup $\S_{\G}$\,, suitably generated by $\G$\,, such that the partial actions of $\G$ transform into actions of $\S_\G$\,, the procedure satisfying a certain universal property. In \cite{LMS} and \cite{BE} this is extended to partial inverse semigroup actions: One starts with an inverse semigroup $\mathcal A$\,, generates another inverse semigroup $\S_\mathcal A$ satisfying certain relations, and then there is a one-to-one correspondence between the partial actions of $\mathcal A$ and the genuine actions of $\S_\mathcal A$\,. In Section \ref{cocoplast} we study the fate of orbits and recurrence sets under this correspondence of actions.

\smallskip
In Section \ref{cocoblast} we work with quotients $\S/_{\!\approx}$ of $\S$ through {\it idempotent pure congruences}. When $\S$ is $E$-unitary, and $\approx$ is the minimum group congruence, then $\S/_{\!\approx}$ is actually {\it the maximal homomorphic group image of} $\S$\,. By \cite{Kr}, there is a canonical way to transform a partial action of $\S$ into a partial action of $\S/_{\!\approx}$ in the same topological space (the case of genuine actions is simpler). We show that the two connected actions have the same invariant sets, and we find relations between the respective recurrence sets.

\smallskip
In Sections \ref{cafloryfier} and \ref{caflorifier} we use well-known connections between inverse semigroups and \'etale groupoids, looking for their effect on recurrence sets. In \cite{FM} a rather detailed study of recurrence and related dynamical properties for continuous groupoid actions can be found, so the results of these two sections will be combined in a future work to convert available groupoid information in terms of inverse semigroup action. But more work will be needed to have a comprehensive theory.

\smallskip
To an inverse semigroup partial action one associates its {\it groupoid of germs}; there is a perfect correspondence of invariant sets. Natural recurrence sets (see Definition \ref{ahu}) may be expressed in pure groupoid lenguaje, as (maybe non-invariant) reductions, as shown in Proposition \ref{aha}. Their connection with the two types of recurrence sets of the initial partial action are then subject of Propositions \ref{bucuros} and \ref{ohanesian}.

\smallskip
Section \ref{caflorifier}. For a groupoid action on a topological space, Definition \ref{giudat} indicates the natural recurrence sets. If the groupoid is \'etale, there is a canonical inverse semigroup formed of bisections. The actions of the groupoid generate actions of this inverse semigroup. Theorem \ref{vertij} and Proposition \ref{onegsion} indicate the way the relevant recurrece sets are linked.

\smallskip
The results on recurrent sets, in all the situations we treat, show a clear preference for $\mathscr S(\th)_\si^N$ upon $\S(\th)_\si^N$. The first set is each time in a one-to-one correspondence with the analog set of the related structure. The second one is only mapped surjectively. All the maps defining the connections seem interesting.

\newpage

\section{Inverse semigroup partial actions and their recurrence sets}\label{masinadecalcat}

For general facts on inverse semigroups and their partial actions we refer mainly to \cite{La,Pa,BC}. 

\begin{defn}\label{invsemigroup}
{\it An inverse semigroup} is a semigroup $\S$ such that for every $s\in\S$ there is a {\it unique} element $s^\sharp\in\S$ satisfying
\begin{equation*}\label{sharp} 
s^\sharp s s^\sharp=s^\sharp,\quad s s^\sharp s=s\,.
\end{equation*}
\end{defn}

We are always going to assume that $\S$ is discrete and infinite. In an inverse semigroup $\S$ a partial order relation is defined by
\begin{equation*}\label{por}
s\le t\quad{\rm if\ and\ only\ if}\quad s=ts^\sharp s\,.
\end{equation*}
On the commutative subsemigroup of idempotents $\mathcal E(\S)$ this reduces to $e=fe$\,. 
The order may be reformulated:
\begin{equation}\label{porg}
s\le t\,\Leftrightarrow\,\exists\,e\in\mathcal E(\S)\,,\,s=et\,\Leftrightarrow\,\exists\,f\in\mathcal E(\S)\,,\,s=tf\,.
\end{equation}

\smallskip
${\rm PHomeo}(\Si)$\,, the family of all the homeomorphisms between open subsets of $\Si$\,, is an inverse semigroup with obvious composition and inversion, the order relation being now just the restriction of functions. 

\begin{defn}\label{arhonti}
{\it A partial action} of the inverse semigroup $\S$ in the topological space $\Si$ is a family 
\begin{equation*}\label{draciuscati}
\th:=\Big\{\Si_{s^\sharp}\!\overset{\th_s}{\longrightarrow}\Si_s\,\Big\vert\,s\in\S\Big\}\,,
\end{equation*}
where $\th_s:\Si_{s^\sharp}\!\equiv{\rm dom}(\th_s)\!\to\Si_s\equiv{\rm im}(\th_s)$ is a homeomorphism between open subsets of $\Si$\,, satisfying for every $s,t\in\S$\,:
\begin{enumerate}
\item[(i)] $\th_{s^\sharp}=\th_s^{-1}$\,,
\item[(ii)] $\th_s\circ\th_t$ is a restriction of $\th_{st}$\,,  
\item[(iii)] if $s\le t$ then $\th_s$ is a restriction of $\th_t$\,,
\item[(iv)] $\Si=\bigcup_{e\in\mathcal E(\S)}\!\Si_e$ (non-degeneracy).
\end{enumerate}
\end{defn}

We speak of a {\it (genuine) action} whenever 
$$
\th_s\circ\th_t=\th_{st}\,,\,\forall\,s,t\in\S.
$$ 
Note that (ii) implies that
\begin{equation}\label{urmari}
\Si_t\cap\th_t\big(\Si_s\big)\subset\Si_{ts}\,,\quad\forall\,s,t\in\S,
\end{equation}
from which one deduces immediately
\begin{equation*}\label{urmariri}
\Si_{t^\sharp}\subset\Si_{t^\sharp t}\,,\quad\Si_{t}\subset\Si_{tt^\sharp}\,,\quad\forall\,t\in\S.
\end{equation*}
In addition, $\th_{e}={\rm id}_{\Si_e}$ for any $e\in\E(\S)$\,. We will often use the notation 
$$
\th_s(\si)\equiv s\diamond\si\equiv s\diamond_\th\!\si.
$$

We define now several dynamical notions. It is useful to set 
\begin{equation*}\label{cencep}
\S\{\th,\si\}:=\big\{s\in\S\,\big\vert\,\si\in\Si_{s^\sharp}\!={\rm dom}(\th_s)\big\}
\end{equation*} 
for the family of points of $\S$ which can be applied to $\si$. In virtue of \eqref{urmari}
\begin{equation*}\label{invirtue}
t\in\S\{\th,\si\}\,,\,s\in\S\{\th,t\di\si\}\,\Rightarrow\,st\in\S\{\th,\si\}\,.
\end{equation*}
An equivalence relation is defined in $\Si$ by
\begin{equation}\label{releq}
\si\overset{\th}{\sim}\tau\quad{\rm if\ and\ only\ if}\quad \exists\,s\in\S\{\th,\si\}\ \,{\rm with}\ \,\th_s(\si)=\tau.
\end{equation}
Consequently, one has usual notions as {\it orbit, orbit closure, invariant subset}, etc. 

\begin{defn}\label{grintesa}
Let $(\S,\th,\Si)$ be an inverse semigroup action. {\it The na\"ive recurrence set assigned to $M,N\subset\Si$} is
\begin{equation}\label{recurset}
\begin{aligned}
\S(\th)^N_M\!:&=\big\{s\in\S\,\big\vert\,\th_s(M)\cap N\ne\emptyset\big\}\\
&=\big\{s\in\S\,\big\vert\,\exists\,\si\in\Si_{s^\sharp}\cap M,\ \th_s(\si)\in N\big\}\,.
\end{aligned}
\end{equation}
\end{defn}

Clearly, if $s\in\S(\th)^N_M$ and $s\le t$ then $t\in\S(\th)^N_M$\,. In the case $M=\{\si\}$ and $N=\{\tau\}$ (with simplified notations)
$$
r,s,t\in\S(\th)^\tau_\si\,\Rightarrow s^\sharp\in\S(\th)_\tau^\si\ \,{\rm and}\ \,rs^\sharp t\in\S(\th)^\tau_\si\,.
$$
In particular $\S(\th)^\si_\si$ is an inverse subsemigroup of $\S$, and $\S(\th)^N_\si$ is contained in $\S\{\th,\si\}=\S(\th)_\si^\Si$ for every $N\subset\Si$\,.

\begin{defn}\label{chivalenta}
We introduce an equivalence relation on $\S\{\th,\si\}$ by
\begin{equation}\label{ivalenta}
s\overset{\th}{\underset{\si}\leftrightarrow} t\quad{\rm if\ and\ only\ if}\quad  \exists\,r\in\S\{\th,\si\}\,,\  r\le s,t\,.
\end{equation}
The quotient space is denoted by $\mathscr S\{\th,\si\}:=\S\{\th,\si\}/_{\!\overset{\th}{\underset{\si}\leftrightarrow}}$\,. The equivalence class of $s$ will be denoted by $\<s\>_\si^\th$ or by $\pi_\si^\th(s)$\,, giving rise to the quotient map 
\begin{equation}\label{quotientm}
\pi_\si^\th:\S\{\th,\si\}\to\mathscr S\{\th,\si\}\,.
\end{equation}
\end{defn}

\begin{rem}\label{aceea}
It follows from the definition that if $s\overset{\th}{\underset{\si}\leftrightarrow} t$ then $\th_s(\si)=\th_t(\si)$ (the converse fails in general). Concerning the connection between $\overset{\th}{\underset{\si}\leftrightarrow} $ and the minimum group congruence $\leftrightarrow$\,, see below.
\end{rem}

The next definition is meant to eliminate redundant contributions to recurrence.

\begin{defn}\label{scrintesa}
Let $(\S,\th,\Si)$ be an inverse semigroup action. {\it The recurrence set assigned to $\si\in\Si$ and $N\subset\Si$} is
\begin{equation}\label{reciurset}
\mathscr S(\th)^N_\si\!:=\S(\th)^N_\si\!/_{\!\overset{\th}{\underset\si\leftrightarrow}}=\big\{\<s\>^\th_\si\,\big\vert\,s\in\S(\th)^N_\si\big\}\,.
\end{equation}
\end{defn}

A standard notion \cite{La,Pa} is the following:

\begin{defn}\label{maxgrup}
{\it The maximum group homomorphic image} of the inverse semigroup $\S$ is the quotient $\S/_{\!\leftrightarrow}$ of $\S$ through {\it the minimum group congruence}
\begin{equation}\label{congruence}
s\leftrightarrow t\,\Leftrightarrow\,\exists\,r\in\S\,,\,r\le s,t\,.
\end{equation}
The class of $s\in\S$ is denoted by $\<s\>$ or by $\pi(s)$\,.
\end{defn}

\begin{rem}\label{notthesame}
The congruence $\leftrightarrow$ is intrinsic to $\S$. If $(\S,\th,\Si)$ is a partial action, on every set $\S\{\th,\si\}$ we have defined the equivalence relation $\overset{\th}{\underset{\si}\leftrightarrow}$ in \eqref{ivalenta}\,, which does depend on the given action. If $s,t\in\S\{\th,\si\}$ for some $\si$, then $s\overset{\th}{\underset{\si}\leftrightarrow} t$ implies $s\leftrightarrow t$\,, but the opposite implication generally fails, since in \eqref{congruence} $r$ is not bound to belong to $\S\{\th,\si\}$\,. In general the restriction of the equivalence relation $\leftrightarrow$ to $\S\{\th,\si\}$ is weaker than $s\overset{\th}{\underset{\si}\leftrightarrow} t$\,. However, see Lemma \ref{oareoare}.
\end{rem}

Recall that the inverse semigroup $\S$ is called {\it $E$-unitary} if every element larger than an idempotent must be an idempotent. This can be restated in several ways, for instance: if $e\leftrightarrow s$ and $e$ is idempotent, then $s$ must also be idempotent. For us, an important fact is the following result, that we did not find in the literature:

\begin{lem}\label{oareoare}
Let $(\S,\th,\Si)$ be the partial action of an $E$-unitary inverse semigroup. If $\si\in\Si$ and $s,t\in\S\{\th,\si\}$\,, then
\begin{equation*}\label{notari}
s\leftrightarrow t\,\Leftrightarrow\,s\overset{\th}{\underset{\si}\leftrightarrow} t\,.
\end{equation*}
\end{lem}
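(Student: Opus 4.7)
The direction $s\overset{\th}{\underset{\si}\leftrightarrow} t \Rightarrow s\leftrightarrow t$ is already recorded in Remark \ref{notthesame}. For the converse, given $s,t\in\S\{\th,\si\}$ with $s\leftrightarrow t$, some witness $r_0\le s,t$ exists in $\S$ by \eqref{congruence}, but a priori $r_0\notin\S\{\th,\si\}$. The plan is to replace $r_0$ by the canonical candidate
$$r := s\,t^{\sharp} t$$
and check (a) $r\le s$ and $r\le t$, and (b) $\si\in\Si_{r^{\sharp}}$. The first inequality is free from $t^{\sharp}t\in\E(\S)$; the second requires $s t^{\sharp}\in\E(\S)$, and this is precisely where the $E$-unitary hypothesis enters.

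\textbf{Step 1: $st^{\sharp}\in\E(\S)$.} From $r_0\le s$ the identity $r_0=s r_0^{\sharp}r_0$ gives $r_0 r_0^{\sharp}=s r_0^{\sharp}$, so $sr_0^{\sharp}$ is idempotent. Since $\leftrightarrow$ is a congruence compatible with the involution, and $r_0\leftrightarrow t$, it follows that $sr_0^{\sharp}\leftrightarrow st^{\sharp}$. Here I invoke the equivalent reformulation of $E$-unitary: \emph{if $e\leftrightarrow x$ with $e\in\E(\S)$, then $x\in\E(\S)$.} (A brief justification: if $y\le e,x$ then $y\le e$ forces $y\in\E(\S)$, and the original $E$-unitary definition then forces $x\in\E(\S)$.) Applying this to $sr_0^{\sharp}\leftrightarrow st^{\sharp}$ yields $st^{\sharp}\in\E(\S)$.

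\textbf{Step 2: $r\le s$ and $r\le t$.} By \eqref{porg}, $r=s\,(t^{\sharp}t)\le s$ since $t^{\sharp}t\in\E(\S)$, and $r=(st^{\sharp})\,t\le t$ by Step 1.

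\textbf{Step 3: $\si\in\Si_{r^{\sharp}}$.} Compute $r^{\sharp}=t^{\sharp}t\,s^{\sharp}$. Applying \eqref{urmari} with the roles of ``$t$'' and ``$s$'' played by $t^{\sharp}t$ and $s^{\sharp}$ respectively gives
$$\Si_{t^{\sharp}t}\cap \th_{t^{\sharp}t}\!\big(\Si_{s^{\sharp}}\big)\ \subset\ \Si_{t^{\sharp}t\,s^{\sharp}}\ =\ \Si_{r^{\sharp}}.$$
Since $t^{\sharp}t\in\E(\S)$, $\th_{t^{\sharp}t}$ is the identity on $\Si_{t^{\sharp}t}$. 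From $t\in\S\{\th,\si\}$ we have $\si\in\Si_{t^{\sharp}}\subset\Si_{t^{\sharp}t}$, and from $s\in\S\{\th,\si\}$ we have $\si\in\Si_{s^{\sharp}}$. So $\si$ sits in the left-hand side and hence in $\Si_{r^{\sharp}}$, i.e.\ $r\in\S\{\th,\si\}$.

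The main obstacle is Step 1: upgrading the always-available idempotency of $s r_0^{\sharp}$ to idempotency of $st^{\sharp}$. Without $E$-unitary this upgrade fails, and the canonical projection $r=st^{\sharp}t$ no longer satisfies $r\le t$, so the strategy collapses. Steps 2 and 3 are then straightforward bookkeeping with \eqref{porg} and \eqref{urmari}, the latter being the natural tool whenever one needs $\si$ to belong to the domain of a product in a partial action.
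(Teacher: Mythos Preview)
Your proof is correct and follows essentially the same route as the paper's: both use the witness $r=st^{\sharp}t$ (the paper writes it as $s\wedge t$), establish that $st^{\sharp}$ is idempotent so that $r\le s,t$, and then show $\si\in\Si_{r^{\sharp}}$ via the domain inclusion $\Si_{s^{\sharp}}\cap\Si_{t^{\sharp}}\subset\Si_{r^{\sharp}}$. The only difference is packaging: the paper outsources the idempotency of $st^{\sharp}$ to the compatibility theory in \cite[pp.\,25,\,66]{La}, whereas you derive it directly from the congruence and the $E$-unitary hypothesis, and the paper computes the domain via $\operatorname{dom}(\th_s\circ\th_{t^{\sharp}}\circ\th_t)$ rather than \eqref{urmari}.
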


\begin{proof}
As mentioned in Remark \ref{notthesame}, we only need to check $\Rightarrow$\,. So let $s,t\in\S\{\th,\si\}$ such that $s\leftrightarrow t$\,. From \cite[Pag.\,25,\,66]{La} it follows that they are {\it compatible}, i.e. $s^\sharp t$ and $st^\sharp$ are idempotents, and $s\wedge t:=st^\sharp t=ts^\sharp s$ is the infimum of $s$ and $t$ with respect to $\le$\,. The domain of $\th_{s\wedge t}$ contains
$$
{\rm dom}\big(\th_s\circ\th_{t^\sharp}\circ\th_t\big)={\rm dom}\big(\th_s\big)\cap{\rm dom}\big(\th_t\big)=\Si_{s^\sharp}\cap\Si_{t^\sharp}\ni \si\,,
$$ 
so $\S\{\th,\si\}\ni s\wedge t\le s,t$\,, thus $s\overset{\th}{\underset{\si}\leftrightarrow} t$\,.
\end{proof}

\begin{rem}\label{maisafie}
Let $(\S,\th,\Si)$ be the partial action of an inverse semigroup and let $M,N\subset\Si$\,. Then the recurrence set $\mathscr S(\th)^N_M$ may be introduced directly as the quotient of the na\"ive recurrence set $\S(\th)^N_M$ through the minimum group congruence $\leftrightarrow$\,:
\begin{equation*}\label{poateiese}
\mathscr S(\th)^N_M\!:=\S(\th)^N_M/_{\!\leftrightarrow}\,.
\end{equation*}
If $\S$ is $E$-unitary, this may be written in terms of \eqref{reciurset} as
\begin{equation*}\label{poateiesse}
\mathscr S(\th)^N_M=\!\bigcup_{\si\in M}\!\mathscr S(\th)^N_\si.
\end{equation*}
\end{rem}

\section{Inverse semigroup expansions}\label{cocoplast}

Following \cite{BE}, we describe briefly {\it the prefix expansion inverse semigroup $\S_\mathcal A$ associated to an inverse semigroup} $\mathcal A$\,. A different point of view can be found in \cite{LMS}\,; the group case appeared in \cite{Ex0} (see also \cite{Ex1}). $\S_\mathcal A$ is generated by elements $[a]$\,, where $a\in\mathcal A$\,, subject to the relations
\begin{equation*}\label{racilaci}
(i)\ \,[a^\sharp][a][b]=[a^\sharp][ab]\,,\quad (ii)\ \,[a][b][b^\sharp]=[ab][b^\sharp]\,,\quad (iii)\ \,[a][a^\sharp][a]=[a]\,.
\end{equation*}
Buss and Exel show that there is an involutive anti-automorphism $\,^\#\!:\S_\mathcal A\to\S_\mathcal A$ such that $[a]^\#\!=[a^\sharp]$ for every $a\in\mathcal A$\,. In fact  $\S_\mathcal A$ is an inverse semigroup; it is $E$-unitary if and only if $\mathcal A$ is $E$-unitary (in this case Lemma \ref{oareoare} applies). The elements $\epsilon_a:=[a][a^\sharp]=[a][a]^\#$ are commuting idempotents. It is also shown that every element of $\S_\mathcal A$ has a unique {\it normal form}
\begin{equation}\label{stanform}
\epsilon_{c_1}\dots\epsilon_{c_n}[a]\,,\quad c_1,\dots c_n,a\in\mathcal A,\ n\in\N\,,
\end{equation}
under the conditions $c_1 c_1^\sharp=\dots=c_n c_n^\sharp=aa^\sharp$ and $a,aa^\sharp\in\{c_1,\dots,c_n\}$\,. It is useful to use the notations
\begin{equation*}\label{grabit}
\iota:\mathcal A\to\S_\mathcal A\,,\quad\iota(a):=[a]\,,
\end{equation*}
\begin{equation*}\label{hrabit}
q:\S_\mathcal A\to\mathcal A\,,\quad q\big(\epsilon_{c_1}\dots\epsilon_{c_n}[a]\big)\!:=a\,.
\end{equation*}
Clearly $\iota$ is a section of $q$\,.

\smallskip
The main property of $\S_\mathcal A$ is its {\it universal property}: If $\mathcal T$ is an inverse semigroup and  $\psi:\mathcal A\to\mathcal T$ is a map satisfying for all elements $a,b\in\mathcal A$ the conditions
\begin{equation*}\label{ditions}
\psi(a^\sharp)\psi(a)\psi(b)=\psi(a^\sharp)\psi(ab)\,,\quad\psi(a)\psi(b)\psi(b^\sharp)=\psi(ab)\psi(b^\sharp)\,,\quad\psi(a)\psi(a^\sharp)\psi(a)=\psi(a)\,,
\end{equation*}
there is a unique inverse semigroup morphism $\Psi:\S_\mathcal A\to\mathcal T$ such that $\Psi([a])=\psi(a)$ for each $a\in\mathcal A$\,.

\smallskip
It is then easy to see that there is a one-to-one correspondence between partial actions of the inverse semigroup $\mathcal A$ and (genuine) actions of its prefix expansion inverse semigroup $\S_\mathcal A$ (in the same topological space $\Si$). 

\begin{prop}\label{anuscata}
The two actions have the same invariant sets. 
\end{prop}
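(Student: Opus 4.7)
The plan is to use the concrete form of the correspondence between partial actions of $\mathcal A$ and genuine actions of $\S_\mathcal A$, together with the normal form \eqref{stanform}, to reduce invariance under $\Th$ (the action of $\S_\mathcal A$) to invariance under $\th$ (the partial action of $\mathcal A$). Call $\Th$ the action of $\S_\mathcal A$ associated to $\th$ by the universal property, applied to the map $\psi:\mathcal A\to{\rm PHomeo}(\Si)$, $\psi(a):=\th_a$; thus $\Th_{[a]}=\th_a$ for every $a\in\mathcal A$. Recall that a subset $X\subset\Si$ is invariant for a (partial) action whenever each transformation of the action sends $X$ intersected with its domain into $X$, equivalently, whenever $X$ is a union of orbits in the sense of \eqref{releq}.

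The easy direction is $\Th$-invariance $\Rightarrow$ $\th$-invariance: since every generator $[a]$ of $\S_\mathcal A$ acts as $\th_a$, a $\Th$-invariant set is automatically stable under all $\th_a$, hence under $\th$.

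For the converse, assume $X$ is $\th$-invariant and pick any $s\in\S_\mathcal A$ together with $\si\in X\cap{\rm dom}(\Th_s)$. Write $s$ in its normal form $s=\epsilon_{c_1}\cdots\epsilon_{c_n}[a]$ as in \eqref{stanform}. Each $\epsilon_{c_i}=[c_i][c_i]^\#$ is idempotent in $\S_\mathcal A$, and since $\Th$ is a genuine action, $\Th_{\epsilon_{c_i}}$ is the identity on its (open) domain (this is the general fact $\th_e={\rm id}_{\Si_e}$ recorded right after \eqref{urmari}). Using $\Th_s=\Th_{\epsilon_{c_1}}\circ\cdots\circ\Th_{\epsilon_{c_n}}\circ\Th_{[a]}$, one sees that $\Th_s$ is just the restriction of $\Th_{[a]}=\th_a$ to ${\rm dom}(\Th_s)\subset{\rm dom}(\th_a)$. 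Therefore $\Th_s(\si)=\th_a(\si)\in X$ by $\th$-invariance, proving that $X$ is $\Th$-invariant.

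The only mildly delicate point is the identification $\Th_s=\th_a\!\mid_{{\rm dom}(\Th_s)}$, which rests on two ingredients: that $\Th$ is a \emph{genuine} action (so we may chain the $\Th$-transformations along the normal form without worrying about proper restrictions of the kind appearing in Definition \ref{arhonti}(ii)), and that the idempotent factors $\Th_{\epsilon_{c_i}}$ act trivially on their domains. Both are immediate from what has already been set up, so no real obstacle is expected.
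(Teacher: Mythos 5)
Your proposal is correct and follows essentially the same route as the paper: decompose an arbitrary element of $\S_\mathcal A$ via its normal form \eqref{stanform}, observe that the idempotent factors act as the identity on their domains, and conclude that the $\S_\mathcal A$-transformation is a restriction of the corresponding $\mathcal A$-transformation, so the invariant sets agree. The only difference is that you spell out both implications explicitly, whereas the paper leaves the final deduction to the reader.
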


\begin{proof}
Start with the partial action $(\mathcal A,\vartheta,\Si)$ and denote by $\th:\S_\mathcal A\to{\rm PHomeo}(\Si)$ the associated inverse semigroup action. For an element in normal form \eqref{stanform} one has
\begin{equation}\label{distres}
\th_{\epsilon_{c_1}\dots\epsilon_{c_n}[a]}=\th_{\epsilon_{c_1}}\!\circ\dots\circ\th_{\epsilon_{c_n}}\!\circ\vartheta_a\,.
\end{equation}
Since $\epsilon_{c_k}$ is an idempotent, $\th_{\epsilon_{c_k}}$ is the identity map on its domain. Consequently, for every $\si\in\Si$ belonging to the (common) domain 
\begin{equation}\label{comun}
{\rm dom}\big(\th_{\epsilon_{c_1}\dots\epsilon_{c_n}[a]}\big)=\Si_{a^\sharp}\cap\vartheta_a^{-1}\big(\Si_{\epsilon_{c_1}}\cap\dots\cap\Si_{\epsilon_{c_n}}\big)
\end{equation} 
of the two transformations in \eqref{distres}, one has 
\begin{equation*}\label{dition}
\th_{\epsilon_{c_1}\!\dots\epsilon_{c_n}[a]}(\si)=\vartheta_a(\si)\,.
\end{equation*}
From this the conclusion can be deduced easily.
\end{proof}

Let us now relate the recurrence sets of the partial inverse semigroup action $\vartheta$ to the two types of recurrence sets of the associated prefix expansion inverse semigroup action $\th$. 

\begin{prop}\label{fusirat}
Let $\th:\S_\mathcal A\to{\rm PHomeo}(\Si)$ be the inverse semigroup action attached to the partial action $(\mathcal A,\vartheta,\Si)$ and let $M,N\subset\Si$\,. Then
\begin{equation}\label{abulic}
\iota\big[\mathcal A(\vartheta)_M^N\big]=\S_\mathcal A(\th)^N_M\cap\iota(\mathcal A)\,,
\end{equation}
\begin{equation}\label{babulic}
q\Big(\S_\mathcal A(\th)^N_M\Big)=\mathcal A(\vartheta)_M^N\,.
\end{equation}
\end{prop}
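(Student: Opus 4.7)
The plan is to reduce both equalities to a single key observation: that the homeomorphism $\th_{[a]}$ associated to the generator $[a]\in\S_\mathcal A$ coincides with the original partial transformation $\vartheta_a$.

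First I would establish this fact. Since $[a]^\sharp=[a^\sharp]$, the element $\epsilon_a=[a][a]^\sharp$ is idempotent and $\epsilon_a[a]=[a][a^\sharp][a]=[a]$ by relation (iii), so $[a]$ is already in normal form (with $n=1$, $c_1=a$, all the required conditions being trivially satisfied). Applying \eqref{distres} gives $\th_{[a]}=\th_{\epsilon_a}\circ\vartheta_a$, and since $\th_{\epsilon_a}$ is the identity on $\Si_{\epsilon_a}\supset\Si_{[a]}={\rm im}(\vartheta_a)=\Si_a$, this simplifies to $\th_{[a]}=\vartheta_a$ as maps $\Si_{a^\sharp}\to\Si_a$.

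For \eqref{abulic}, the inclusion $\subset$ is immediate: if $a\in\mathcal A(\vartheta)^N_M$, then $\vartheta_a(M)\cap N\ne\emptyset$, hence $\th_{[a]}(M)\cap N\ne\emptyset$, so $\iota(a)=[a]\in\S_\mathcal A(\th)^N_M\cap\iota(\mathcal A)$. For $\supset$, any element of $\S_\mathcal A(\th)^N_M\cap\iota(\mathcal A)$ is of the form $[a]$ with $\th_{[a]}(M)\cap N\ne\emptyset$, and the same identification yields $a\in\mathcal A(\vartheta)^N_M$.

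For \eqref{babulic}, the inclusion $\supset$ follows at once from \eqref{abulic} since $q\circ\iota={\rm id}_\mathcal A$. The inclusion $\subset$ is where the normal form does real work: given $s\in\S_\mathcal A(\th)^N_M$, write it as $s=\epsilon_{c_1}\cdots\epsilon_{c_n}[a]$ with $a=q(s)$. By \eqref{comun} the domain of $\th_s$ is contained in $\Si_{a^\sharp}={\rm dom}(\vartheta_a)$, and on this domain $\th_s$ agrees with $\vartheta_a$. Picking $\si\in M\cap{\rm dom}(\th_s)$ with $\th_s(\si)\in N$ yields $\si\in M\cap\Si_{a^\sharp}$ with $\vartheta_a(\si)=\th_s(\si)\in N$, so $a=q(s)\in\mathcal A(\vartheta)^N_M$.

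I do not expect a real obstacle here; the only subtlety is resisting the temptation to treat $[a]$ as if it had no idempotent prefix. The argument relies on the fact that any normal-form representative of $s$ has $q(s)$ as its "last letter" and that the extra idempotent factors $\epsilon_{c_k}$ only shrink the domain of $\th_s$ inside ${\rm dom}(\vartheta_a)$, never altering the action once the domain condition is met.
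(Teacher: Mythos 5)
Your proof is correct. For \eqref{abulic} you argue essentially as the paper does: everything reduces to the identification $\th_{[a]}=\vartheta_a$ (same domain $\Si_{a^\sharp}$, same values), which you extract from the normal form and \eqref{distres} and which the paper reads off directly from the construction of $\th$. For \eqref{babulic} you take the direct route that the paper only alludes to with ``follows by analyzing the definitions'': you pick a witness $\si\in M\cap{\rm dom}(\th_s)$ and push it through the normal form of $s$, using \eqref{comun} to see that ${\rm dom}(\th_s)\subset\Si_{q(s)^\sharp}$ and that $\th_s$ agrees with $\vartheta_{q(s)}$ there. The paper instead deduces \eqref{babulic} formally from \eqref{abulic} via the chain $q(X)=q(X)\cap q(\iota(\mathcal A))=q\big(X\cap\iota(\mathcal A)\big)$ for $X=\S_\mathcal A(\th)^N_M$; the nontrivial middle step rests on the same normal-form input you use explicitly, namely that $s\le\iota(q(s))$ and that na\"ive recurrence sets are upward closed, so $s\in X$ forces $\iota(q(s))\in X\cap\iota(\mathcal A)$. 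Your version is more self-contained and makes the role of the idempotent prefix $\epsilon_{c_1}\dots\epsilon_{c_n}$ transparent (it only shrinks the domain, never changes the value); the paper's version is shorter once \eqref{abulic} is available, at the cost of compressing that same observation into the remark that $\iota(\mathcal A)$ behaves as a saturated set for the intersection with $X$. Both arguments are sound.
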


\begin{proof}
One has $s=[a]\in\S_\mathcal A(\th)^N_M\cap\,\iota(\mathcal A)$ if and only if there is some $\si\in M\cap\Si_{[a]^\#}\!=M\cap\Si_{a^\sharp}$ with $\th_{[a]}(\si)=\vartheta_a(\si)\in N$, and this means exactly that $[a]=\iota(a)\in\iota\big[\mathcal A(\vartheta)_M^N\big]$\,. Thus \eqref{abulic} is proven.

\smallskip
The relation \eqref{babulic} follows by analyzing the definitions, or we may write
$$
\begin{aligned}
q\Big(\S_\mathcal A(\th)^N_M\Big)&=q\Big(\S_\mathcal A(\th)^N_M\Big)\cap q\big(\iota(\mathcal A)\big)\\
&=q\Big(\mathcal S_\mathcal A(\th)^N_M\cap \iota(\mathcal A)\Big)\\
&=q\Big(\iota\big[\mathcal A(\vartheta)_M^N\big]\Big)\\
&=\mathcal A(\vartheta)_M^N\,.
\end{aligned}
$$
We used the fact that $\iota$ is a section of $q$\,, equation \eqref{abulic} and the fact that $\iota(\mathcal A)$ is saturated (to distribute $q$ to the intersection).
\end{proof}

\begin{thm}\label{potrivire}
Let $\th:\S_\mathcal A\to{\rm PHomeo}(\Si)$ be the inverse semigroup action attached to the partial action $(\mathcal A,\vartheta,\Si)$ and let $\si\in\Si$\,. 
\begin{enumerate}
\item[(i)]
The map $q:\S_\mathcal A\to\mathcal A$ restricts to a surjection $q_\si\!:\S_\mathcal A\{\th,\si\}\to\mathcal A\{\vartheta,\si\}$\,. 
\item[(ii)]
The map 
\begin{equation*}\label{map}
Q_\si:\mathscr S_\mathcal A\{\th,\si\}\!:=\S_\mathcal A\{\th,\si\}/_{\!\overset{\th}{\underset{\si}\leftrightarrow}}\to\mathcal A\{\vartheta,\si\}\,,\quad Q_\si\big[\pi^\th_\si(s)\big]:=q_\si(s)
\end{equation*}
is a well-defined bijection. One has bijectively
\begin{equation}\label{siguranta}
Q_\si\big[\mathscr S_\mathcal A(\th)_\si^N\big]=\mathcal A(\vartheta)_\si^N,\quad\forall\,N\subset\Si\,.
\end{equation}
\end{enumerate}
\end{thm}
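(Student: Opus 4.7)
The plan is to settle part (i) by a one-step normal-form computation, and for part (ii) to treat surjectivity, injectivity, well-definedness of $Q_\si$, and the identity \eqref{siguranta} in turn; the only delicate step is well-definedness.

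For part (i), write $s\in\S_\mathcal A\{\th,\si\}$ in its unique normal form $s=\epsilon_{c_1}\dots\epsilon_{c_n}[a]$, so $q(s)=a$. The domain description \eqref{comun} shows ${\rm dom}(\th_s)\subseteq\Si_{a^\sharp}$, hence $\si\in{\rm dom}(\th_s)$ forces $\si\in\Si_{a^\sharp}$, i.e.\ $q(s)\in\mathcal A\{\vartheta,\si\}$. Conversely, for $a\in\mathcal A\{\vartheta,\si\}$ the generator $[a]$ satisfies $q([a])=a$ and ${\rm dom}(\th_{[a]})=\Si_{a^\sharp}\ni\si$, giving surjectivity.

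For the injective half of (ii), suppose $q(s)=q(t)=a$ and write $s=\epsilon_{c_1}\dots\epsilon_{c_n}[a]$, $t=\epsilon_{d_1}\dots\epsilon_{d_m}[a]$ in normal form. Since the idempotents $\epsilon_c$ commute, the element $r:=\epsilon_{c_1}\dots\epsilon_{c_n}\epsilon_{d_1}\dots\epsilon_{d_m}[a]$ is an idempotent left-multiple of both $s$ and $t$, whence $r\le s,t$. Reading off ${\rm dom}(\th_r)$ via \eqref{comun}, the condition $\si\in{\rm dom}(\th_r)$ is exactly the conjunction of the corresponding conditions for $s$ and for $t$, both of which hold by hypothesis; therefore $r\in\S_\mathcal A\{\th,\si\}$ and $s\overset{\th}{\underset{\si}\leftrightarrow}t$. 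Surjectivity of $Q_\si$ is inherited from part (i).

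The main obstacle is well-definedness: if $r\in\S_\mathcal A\{\th,\si\}$ with $r\le s,t$ in $\S_\mathcal A$, one must conclude $q(s)=q(t)$. Applying the universal property of $\S_\mathcal A$ to the identity map $\mathcal A\to\mathcal A$ (which satisfies (i)--(iii)) shows that $q$ is a semigroup morphism, so $q(r)\le q(s)$ and $q(r)\le q(t)$ in $\mathcal A$; this does not yet give equality. The plan is to exploit the combinatorial rigidity of the prefix expansion by unpacking $r\le s$ through normal forms, using the hypothesis that both $r$ and $s$ apply at $\si$ (hence $\si\in\Si_{a^\sharp}\cap\Si_{q(r)^\sharp}$) to pin down the trailing generator $[a]$. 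Doing this for $r\le s$ and $r\le t$ simultaneously should yield $q(s)=q(r)=q(t)$. Finally, \eqref{siguranta} is a direct bookkeeping step: by construction $Q_\si[\mathscr S_\mathcal A(\th)_\si^N]=q(\S_\mathcal A(\th)_\si^N)=\mathcal A(\vartheta)_\si^N$ by Proposition \ref{fusirat}\,\eqref{babulic}, and the bijectivity of this restricted correspondence follows from the bijectivity of $Q_\si$ already obtained.
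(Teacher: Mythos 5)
Part (i), the surjectivity of $Q_\si$, and the implication $q_\si(s)=q_\si(t)\Rightarrow s\overset{\th}{\underset{\si}\leftrightarrow}t$ are correct and follow the same route as the paper: the domain formula \eqref{comun} for (i), and the element $r=\epsilon_{c_1}\dots\epsilon_{c_n}\epsilon_{d_1}\dots\epsilon_{d_m}[a]$ together with $\si\in{\rm dom}(\th_r)={\rm dom}(\th_s)\cap{\rm dom}(\th_t)$ for the injective half.

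The well-definedness of $Q_\si$, however, is left as an announced ``plan'' and never carried out, and this is a genuine gap --- it is the one step of the theorem that resists routine bookkeeping. Your own intermediate observation is the telling one: since $q$ is the morphism obtained from the universal property applied to ${\rm id}_\mathcal A$, the relation $r\le s$ only yields $r=es$ with $e\in\mathcal E(\S_\mathcal A)$, hence $q(r)=q(e)\,q(s)\le q(s)$, and no normal-form combinatorics upgrades this to an equality in general. Indeed the implication you are trying to prove can fail: take two distinct idempotents $f\le g$ in $\mathcal A$ (so $gf=f$). The defining relations give $[f][f]=[f]$ and $[g][f]=[gf][f]=[f][f]=[f]$, so $[f]\le[g]$ in $\S_\mathcal A$ by \eqref{porg}, while the normal forms $\epsilon_f[f]$ and $\epsilon_g[g]$ show $[f]\ne[g]$ and $q([f])=f\ne g=q([g])$. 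For any partial action $\vartheta$ and any $\si\in\Si_f\,(\subset\Si_g)$, the element $r=[f]\in\S_\mathcal A\{\th,\si\}$ then witnesses $[f]\overset{\th}{\underset{\si}\leftrightarrow}[g]$, yet the $q$-values differ. So the ``combinatorial rigidity'' you invoke cannot close the gap at this level of generality; the step (and with it the bijectivity of $Q_\si$ and \eqref{siguranta}) needs an additional hypothesis, for instance $\mathcal A$ a group --- where $\le$ is equality, so $q(r)\le q(s)$ already forces $q(r)=q(s)$, which is Exel's original setting --- or some condition preventing distinct comparable elements of $\mathcal A$ from acting at the same point. (The paper's own proof asserts at exactly this spot that $r\le s$ implies $q(r)=q(s)$ ``using \eqref{porg} and normal forms''; the example above shows that this assertion, and hence the statement as written, must be qualified.) You should either supply such a hypothesis or weaken the conclusion before claiming the bijection.
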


\begin{proof}
$(i)$ As mentioned above, the domain of $\th_{\epsilon_{c_1}\dots\epsilon_{c_n}[a]}$ is $\Si_{a^\sharp}\cap\,\vartheta_a^{-1}\big(\Si_{\epsilon_{c_1}}\cap\dots\cap\Si_{\epsilon_{c_n}}\big)$\,, contained in  $\Si_{a^\sharp}$\,, the domain of $\vartheta_a$\,, where $a=q\big(\epsilon_{c_1}\dots\epsilon_{c_n}[a]\big)$\,. This shows that $q\big(\S_\mathcal A\{\th,\si\}\big)\subset\mathcal A\{\vartheta,\si\}$\,. Surjectivity is obvious: if $a\in\mathcal A\{\vartheta,\si\}$ then $[a]\in\S_\mathcal A\{\th,\si\}$ and $q([a])=a$\,.

\smallskip
$(ii)$ The statement about $Q_\si$ relies on showing that {\it $q_\si(s)=q_\si(t)$ if and only if $s\overset{\th}{\underset{\si}\leftrightarrow}t$}\,; this proves both the correctness of the definition and the injectvity. (Surjectivity follows from the fact that $q_\si$ is onto.) Using normal forms
$$
s:=\epsilon_{c_1}\dots\epsilon_{c_n}[a]\,,\quad t:=\epsilon_{d_1}\dots\epsilon_{d_m}[b]\,,
$$
the assumption $q_\si(s)=q_\si(t)$ reads $a=b$\,. Then 
\begin{equation*}\label{r}
r\!:=\epsilon_{c_1}\dots\epsilon_{c_n}\epsilon_{d_1}\dots\epsilon_{d_m}[a]\in\S_\mathcal A
\end{equation*} 
satisfies $r\le s,t$\,. From \eqref{comun} one gets easily 
$$
\si\in{\rm dom}\big(\th_r\big)={\rm dom}\big(\th_s\big)\cap{\rm dom}\big(\th_t\big)\,,
$$
so $r\in\S_\mathcal A\{\th,\si\}$ and we get $s\overset{\th}{\underset{\si}\leftrightarrow}t$\,. For the inverse implication, using \eqref{porg} and normal forms, one sees that $r\le s$ implies $q(r)=q(s)$\,. Then $s\overset{\th}{\underset{\si}\leftrightarrow}t$ implies $r\le s,t$ for some $r$, therefore $q(t)=q(r)=q(s)$\,. The relation \eqref{siguranta} follows from previous results and definitions:
$$
\mathcal A(\vartheta)_\si^N\overset{\eqref{babulic}}{=}q\Big(\S_\mathcal A(\th)^N_\si\Big)=Q_\si\Big[\pi^\th_\si\Big(\S_\mathcal A(\th)^N_\si\Big)\Big]\overset{\eqref{reciurset}}{=}Q_\si\Big[\mathscr S_\mathcal A(\th)^N_\si\Big]\,.
$$
\end{proof}

\section{Quotients through idempotent pure congruences}\label{cocoblast}

Let us fix an {\it idempotent pure congruence} $\approx$ in $\S$. By definition \cite{La}, this means that $\approx$ is an equivalence relation on $\S$ such that
\begin{equation*}\label{ameteala}
s\approx t\,,\ u\approx v\ \Rightarrow\ su\approx tv,
\end{equation*}
\begin{equation*}\label{abureala}
s\approx e\,,\ e\in\mathcal E(\S)\ \Rightarrow\ s\in\mathcal E(\S)\,.
\end{equation*}
We set $p(s)$ for the $\approx$-equivalence class of $s\in\S$\,. Note that $p:\S\to\S/_{\!\approx}$ is an inverse semigroup epimorphism, where the inversion in the quotient is $p(s)^\dagger\!:=p\big(s^\sharp\big)$\,. It is convenient to set $\mathcal R:=\S/_{\!\approx}$\,. A crucial fact (\cite[Lemma\,2.2]{Kr}) is that {\it if $(\S,\th,\Si)$ is a partial action of the inverse semigroup $\S$, there exists a unique partial action $\big(\mathcal R,\vartheta,\Si\big)$ such that}
\begin{itemize}
\item  $\Si^\vartheta_a=\bigcup_{p(s)=a}\Si^\th_s\,,\ \forall\,a\in\mathcal R$\,,
\item $\vartheta_{p(s)}(\si)=\th_s(\si)\,,\ \forall\,\si\in\Si^\th_{s^\sharp}\big(\!\subset\Si^\vartheta_{p(s)^\dagger}\big)$\,.
\end{itemize}

\begin{thm}\label{gramada}
Let $(\S,\th,\Si)$ and $\big(\mathcal R,\vartheta,\Si\big)$ as above.
\begin{enumerate}
\item[(i)] The two partial actions have the same orbits and the same invariant sets.
\item[(ii)] For every $\si\in\Si$ the map $p:\S\to\mathcal R$ restricts to a surjection $p_\si\!:\S\{\th,\si\}\to\mathcal R\{\vartheta,\si\}$\,. For $N\subset\Si$ one has $p_\si\big[\S(\th)_\si^N\big]=\mathcal R(\vartheta)_\si^N$.
\item[(iv)] The map 
\begin{equation*}\label{miap}
P_\si:\mathscr S\{\th,\si\}\!:=\mathcal S\{\th,\si\}/_{\!\overset{\th}{\underset{\si}\leftrightarrow}}\to\mathscr R\{\vartheta,\si\}:=\mathcal R\{\vartheta,\si\}/_{\!\overset{\vartheta}{\underset{\si}\leftrightarrow}}\,,\quad P_\si\big[\pi^\th_\si(s)\big]:=\pi^\vartheta_\si\big[p_\si(s)\big]
\end{equation*}
is a well-defined surjection. For $N\subset\Si$ one has 
\begin{equation}\label{setemare}
P_\si\big[\mathscr S(\th)_\si^N\big]=\mathscr R(\vartheta)_\si^N.
\end{equation}
\end{enumerate}
\end{thm}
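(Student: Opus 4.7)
The plan is to tackle (i), (ii) and (iv) in order, using only the two bullet points that define $\vartheta$ from $\th$, the fact that $p$ is an inverse semigroup epimorphism (so it preserves $\leq$), and the definition \eqref{ivalenta} of the action-dependent equivalence.

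For (i) I would prove that both actions yield the same equivalence \eqref{releq}. The direction $\overset{\th}{\sim}\,\Rightarrow\,\overset{\vartheta}{\sim}$ is immediate: if $\th_s(\si)=\tau$ with $\si\in\Si^\th_{s^\sharp}$, then the inclusion $\Si^\th_{s^\sharp}\subset\Si^\vartheta_{p(s)^\dagger}$ (from the first bullet applied to $p(s^\sharp)=p(s)^\dagger$) places $\si$ in the domain of $\vartheta_{p(s)}$, and the second bullet gives $\vartheta_{p(s)}(\si)=\th_s(\si)=\tau$. For the converse, suppose $\vartheta_a(\si)=\tau$; since $\si\in\Si^\vartheta_{a^\dagger}=\bigcup_{p(t)=a^\dagger}\Si^\th_t$, pick $t$ with $p(t)=a^\dagger$ and $\si\in\Si^\th_t$, set $u:=t^\sharp$. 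Then $p(u)=a$, $\si\in\Si^\th_{u^\sharp}$, so $\th_u(\si)=\vartheta_{p(u)}(\si)=\vartheta_a(\si)=\tau$. Equality of orbits forces equality of invariant sets.

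For (ii), the previous argument already shows that $p$ sends $\S\{\th,\si\}$ into $\mathcal R\{\vartheta,\si\}$ and that this restriction $p_\si$ is surjective: given $a\in\mathcal R\{\vartheta,\si\}$, construct $u$ as above. The equality $p_\si\big[\S(\th)_\si^N\big]=\mathcal R(\vartheta)_\si^N$ then reduces to remembering that $\vartheta_{p(s)}(\si)=\th_s(\si)$ on the relevant domains: the inclusion $\subseteq$ is immediate, and $\supseteq$ uses the same lifting $a\mapsto u$ with $p(u)=a$ and $\th_u(\si)=\vartheta_a(\si)\in N$.

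For (iv) the first task is well-definedness of $P_\si$. Suppose $s\overset{\th}{\underset{\si}\leftrightarrow}t$, so $r\le s,t$ for some $r\in\S\{\th,\si\}$. Writing $r=es=ft$ with $e,f\in\mathcal E(\S)$ and applying $p$, I obtain $p(r)=p(e)p(s)=p(f)p(t)$ with $p(e),p(f)$ idempotent in $\mathcal R$; by \eqref{porg} this means $p(r)\le p(s),p(t)$. Since $p(r)\in\mathcal R\{\vartheta,\si\}$ by (ii), we get $p(s)\overset{\vartheta}{\underset{\si}\leftrightarrow}p(t)$, so $P_\si$ is well defined. Surjectivity follows directly from surjectivity of $p_\si$. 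Finally \eqref{setemare} is a chase through the definitions, combining (ii) with the identity $\pi^\vartheta_\si\circ p_\si=P_\si\circ\pi^\th_\si$:
$$
P_\si\big[\mathscr S(\th)_\si^N\big]=P_\si\big[\pi^\th_\si\big(\S(\th)_\si^N\big)\big]=\pi^\vartheta_\si\big(p_\si\big[\S(\th)_\si^N\big]\big)=\pi^\vartheta_\si\big[\mathcal R(\vartheta)_\si^N\big]=\mathscr R(\vartheta)_\si^N.
$$

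The main obstacle I expect is the bookkeeping in the surjectivity argument in (ii): the defining relation $\Si^\vartheta_a=\bigcup_{p(s)=a}\Si^\th_s$ gives an arbitrary $\S$-lift $t$ of $a^\dagger$ (not of $a$), and one must perform the inversion $u:=t^\sharp$ to land in $\S\{\th,\si\}$ while keeping $p(u)=a$; this step is what ultimately makes every other claim work, and it is also the only point where failure of $P_\si$ to be injective is visible (two different choices of the lift $u$ need not be related by $\overset{\th}{\underset{\si}\leftrightarrow}$, in contrast with Theorem \ref{potrivire} where Lemma \ref{oareoare} would be needed to upgrade surjectivity to bijectivity).
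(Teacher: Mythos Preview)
Your proof is correct and follows essentially the same approach as the paper: part (i) via the two bullets defining $\vartheta$ (you spell out both directions where the paper just says ``follows easily''), part (ii) via the equivalence chain $a\in\mathcal R\{\vartheta,\si\}\Leftrightarrow\exists\,s\in\S\{\th,\si\}$ with $p(s)=a$ (your inversion $u:=t^\sharp$ is precisely the step hidden in the paper's identity $\Si^\vartheta_{a^\dagger}=\bigcup_{s\in a}\Si^\th_{s^\sharp}$), and part (iv) via order-preservation of $p$ for well-definedness followed by the same four-term chain for \eqref{setemare}. Your closing remark on why $P_\si$ need not be injective is accurate and anticipates Example~\ref{hazard}.
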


\begin{proof}
$(i)$ follow easily from the definitions and from the way $\vartheta$ has been constructed out of $\th$\,. 

\smallskip
For $(ii)$, we have
$$
a\in\mathcal R\{\vartheta,\si\}\,\Leftrightarrow\,\si\in\Si^\vartheta_{a^\dagger}\!=\bigcup_{s\in a}\Si^\th_{s^\sharp}\,\Leftrightarrow\,\exists\,s\in a\,,\si\in\,\Si^\th_{s^\sharp}\,\Leftrightarrow\,\exists\,s\in\S\{\th,\si\}\,,\,p(s)=a\,. 
$$
Including the set $N$ in the arguments is trivial.

\smallskip
$(iii)$ It is clear that $P_\si$ is onto. One still has to check  that if $s,t\in\S\{\th,\si\}$ and $s\overset{\th}{\underset{\si}\leftrightarrow}t$ then $p_\si(s)\overset{\vartheta}{\underset{\si}\leftrightarrow}p_\si(t)$\,, ensuring that $P_\si$ is well-defined. This follows easily from the definition of the equivalences, since a homomorphism of inverse semigroups preserves the orderings. Then \eqref{setemare} is a consequence of the previous definitions and results:
$$
\mathscr R(\vartheta)_\si^N=\pi^\vartheta_\si\big[\mathcal R(\vartheta)^N_\si\big]=\pi^\vartheta_\si\big[p_\si\big(\S(\th)_\si^N\big)\big]=P_\si\big[\pi^\th_\si\big(\S(\th)_\si^N\big)\big]=P_\si\big[\mathscr S(\th)_\si^N\big]\,.
$$
\end{proof}

\begin{ex}\label{hazard}
Let us suppose now that $\S$ is a $E$-unitary inverse semigroup. It is known then \cite[Sect.\ 2.4]{La} that $\leftrightarrow$ is an idempotent pure congruence, that we use instead of $\approx$ in the above arguments. In addition $\mathcal R:=\S/_{\!\leftrightarrow}$ is a group (the maximum group homomorphic image of $\S$). To the partial action $\th$ of $\S$ on $\Si$ we associate as above the partial action $\vartheta$ of $\mathcal R$ in $\Si$\,. Since now $\mathcal R$ is a group, the equivalence relation $\overset{\vartheta}{\underset{\si}\leftrightarrow}$ on $\mathcal R\{\vartheta,\si\}$ is just the equality, so $\mathscr R\{\vartheta,\si\}=\mathcal R\{\vartheta,\si\}$. By Theorem \ref{gramada} one has the well-defined surjection $P_\si:\mathscr S\{\th,\si\}\to\mathscr R\{\vartheta,\si\}$\,. In fact, {\it it is even a bijection}. To show this, one has to check  for $s,t\in\S\{\th,\si\}$ that $p_\si(s)=p_\si(t)$ implies that $\pi^\th_\si(s)=\pi^\th_\si(s)$\,. Taking the definitions into consideration, this amounts to $s\leftrightarrow t\,\Rightarrow\,s\overset{\theta}{\underset{\si}\leftrightarrow}t$\,. This is solved by Lemma \ref{oareoare}. Consequently, in this case, {\it the recurrence sets $\mathscr S(\th)_\si^N$ and $\mathscr R(\vartheta)_\si^N$ are in a one-to-one correspondence for every $N\subset\Si$}\,.
\end{ex}

\section{From partial actions of inverse semigroups to groupoids}\label{cafloryfier}

{\it Groupoids} are small categories in which all the arrows are inverible. The object part of such a category $\Xi$ is also called {\it the unit space} and denoted by $\Xi^{(0)}\!\equiv X$.
{\it The source and range maps}, denoted by ${\rm d,r}:\Xi\to \Xi^{(0)}$, define the family of composable pairs
$$
\Xi^{(2)}\!=\{(\xi,\eta)\!\mid\!\d(\xi)=\r(\eta)\}\!\subset\Xi\times\Xi\,.
$$ 
For $M,N\subset X$ one sets 
\begin{equation}\label{faneaka}
\Xi_M\!:={\rm d}^{-1}(M)\,,\quad\Xi^N\!:={\rm r}^{-1}(M)\,,\quad\Xi_M^N:=\Xi_M\cap\Xi^N,
\end{equation}
with the particular notations $\Xi_x\equiv\Xi_{\{x\}}$ and $\Xi^x\equiv\Xi^{\{x\}}$. 
{\it A topological groupoid} is a groupoid $\Xi$ with a topology such that the inversion $\xi\mapsto\xi^{-1}$ and multiplication $(\xi,\eta)\mapsto \xi\eta$ are continuous. 

\smallskip
Following \cite{BC}, we now define and use {\it the groupoid of germs} associated to the partial action $\th$ of the inverse semigroup $\S$ on the topological space $\Si$\,. (For the case of genuine actions, see \cite{Ex,Ex1,Pa} and references therein.) In this generality, this also covers the transformation groupoids of partial group actions from \cite{Ex0}\,. First define
\begin{equation}\label{closegeam}
\begin{aligned}
\S\,\square\,\Si\!:&=\big\{(s,\si)\in\S\!\times\!\Si\,\big\vert\,\si\in\Si_{s^\sharp}\big\}\\
&=\big\{(s,\si)\in\S\!\times\!\Si\,\big\vert\,s\in\S\{\th,\si\}\big\}\,.
\end{aligned}
\end{equation}

\begin{defn}\label{caimak}
We say that $(s,\si),(t,\tau)\in\S\,\square\,\Si$ are {\it germ equivalent}, and write $(s,\si)\overset{\th}{\rightleftharpoons}(t,\tau)$\,, if $\si=\tau$ and $s\overset{\th}{\underset{\si}\leftrightarrow}t$\,.
\end{defn}

\begin{rem}\label{ttrecut}
Note that $\overset{\th}{\leftrightharpoons}$ is an equivalence relation, that depends on the partial action $\th$. However, when $\S$ is $E$-unitary, by Lemma \ref{oareoare}, it becomes independent of $\th$\,.
In the case of partial group actions, $\leftrightharpoons$ is just the equality.
\end{rem}

The quotient $\S\,\square\,\Si/_{\!\overset{\th}{\leftrightharpoons}}$ is also denoted as $\S\triangleright_\th\!\Si$\,.
Its generic elements are denoted by $\<s,\si\>\!:=[(s,\si)]_{\overset{\th}{\leftrightharpoons}}$\,. Then $\S\triangleright_\th\!\Si$ is a groupoid with unit space $\Si$\,, with the well-defined algebraic structure 
\begin{equation}\label{dezinvolt}
\mathfrak d\big(\<s,\si\>\big):=\si\,,\quad\mathfrak r\big(\<s,\si\>\big):=\th_s(\si)\,,
\end{equation}
\begin{equation*}\label{aialanta}
\big\<t,\th_s(\si)\big\>\<s,\si\>:=\<ts,\si\>\,,\quad\<s,\si\>^\star\!:=\big\<s^\sharp,\th_s(\si)\big\>\,.
\end{equation*}
Note that $\<e,\si\>$ is independent of $e\in\mathcal E(\S)$ if $\si\in\Si_e$\,, hence it will be identified with $\si$.

\smallskip
One defines a topology on $\S\triangleright_\th\!\Si$ as follows: Given $s\in\S$ and an open set $U\subset \Si_{s^\sharp}$, one sets 
\begin{equation*}\label{holosesc}
\<s,U\>:=\big\{\<s,\si\>\in\S\tr_\th\!\Si\,\big\vert\,\si\in U\big\}\,.
\end{equation*} 
The family of all these sets forms a basis for a topology that makes $\S\tr_\th\!\Si$ an \'etale topological groupoid. 

\smallskip
We are going to use systematically groupoid actions in the next section. One only needs here a particular case (the canonical action, cf.\;Example \ref{startlet}). For a topological groupoid $\Xi$ with unit space $\Si$\,, we write $\xi\ast\d(\xi):=\r(\xi)$ for every $\xi\in\Xi$\,. This may be reformulated as $\xi\ast\si:=\xi\si\xi^{-1}$ if $\si=\d(\xi)$\,. Orbit equivalence is denoted by $\si\overset{\Xi}{\sim}\tau$, and it means that $\si=\d(\xi)$ and $\tau=\r(\xi)$ for some $\xi\in\Xi$\,, i.e. $\Xi_\si^\tau\ne\emptyset$\,.

\begin{prop}\label{enoeno}
Use the shorthand $\,\Xi\equiv\S\tr_\th\!\Si$ and consider its canonical action on its unit space. One has $\si\overset{\Xi}{\sim}\tau$ if and only if $\si\overset{\th}{\sim}\tau$\,. Hence the orbits in the unit space of the germ groupoid coincide with the orbits of the initial inverse semigroup partial action. Invariant subsets are the same.
\end{prop}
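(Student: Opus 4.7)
The plan is to unravel both equivalence relations and observe that they are literally the same statement, so no hard work is required; the result is a direct consequence of the way the germ groupoid was constructed in the previous section.

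First I would unpack $\si\overset{\Xi}{\sim}\tau$. By the definition of the canonical action on the unit space, this means that there exists $\xi\in\Xi=\S\tr_\th\!\Si$ with $\mathfrak d(\xi)=\si$ and $\mathfrak r(\xi)=\tau$, i.e.\ $\Xi_\si^\tau\ne\emptyset$. Now every element $\xi\in\Xi$ has the form $\<s,\rho\>$ for some $(s,\rho)\in\S\,\square\,\Si$, and by the description \eqref{dezinvolt} of the structural maps of the germ groupoid, $\mathfrak d\big(\<s,\rho\>\big)=\rho$ and $\mathfrak r\big(\<s,\rho\>\big)=\th_s(\rho)$. Hence $\si\overset{\Xi}{\sim}\tau$ is equivalent to the existence of some $s\in\S$ with $\rho:=\si\in\Si_{s^\sharp}$ (so that $(s,\si)\in\S\,\square\,\Si$, i.e.\ $s\in\S\{\th,\si\}$, by \eqref{closegeam}) and $\th_s(\si)=\tau$.

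Second, I would compare with \eqref{releq}: $\si\overset{\th}{\sim}\tau$ is defined as the existence of $s\in\S\{\th,\si\}$ with $\th_s(\si)=\tau$. This matches exactly the reformulation of $\si\overset{\Xi}{\sim}\tau$ obtained in the previous paragraph, so the two equivalence relations on $\Si$ coincide.

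Finally, I would note that once the two equivalence relations agree, the orbits (which are precisely the equivalence classes) coincide; and a subset $A\subset\Si$ is invariant exactly when it is saturated under the corresponding equivalence relation, so the two notions of invariant subset coincide as well. The only ``obstacle'' here is to check that the representatives used in $\Xi$ are consistent with the germ equivalence $\overset{\th}{\rightleftharpoons}$ from Definition~\ref{caimak}, but this is immediate: the source and range maps on $\S\tr_\th\!\Si$ were defined to be well-defined precisely on $\overset{\th}{\rightleftharpoons}$-classes, and the existence of a germ $\<s,\si\>$ with prescribed source $\si$ and range $\tau$ is unaffected by passing to the quotient.
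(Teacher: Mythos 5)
Your proposal is correct and follows essentially the same route as the paper: unpack $\si\overset{\Xi}{\sim}\tau$ via the source and range maps \eqref{dezinvolt} of the germ groupoid and match it against \eqref{releq}. The only difference is cosmetic: the paper additionally verifies by an explicit computation that the conjugation formula $\<s,\si\>\ast\si=\<s,\si\>\<s^\sharp s,\si\>\<s,\si\>^{-1}$ returns $\th_s(\si)$, whereas you invoke directly that the canonical action sends $\mathfrak d(\xi)$ to $\mathfrak r(\xi)$; both are legitimate readings of the definition given in Example \ref{startlet}.
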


\begin{proof}
By \eqref{releq}, $\si\overset{\th}{\sim}\tau$ if and only if $\th_s(\si)=\tau$ for some $s\in\S$ such that $\si\in\Si_{s^\sharp}$. Equivalently, $\<s,\si\>\ast\si=\tau$. Indeed, the only not-straightforward piece is the computation 
$$
\<s,\si\>\ast\si=\<s,\si\>\<s^\sharp s,\si\>\<s,\si\>^{-1}=\<ss^\sharp s,\si\>\<s^\sharp,\th_{s}(\si)\>=\<ss^\sharp,\th_{s}(\si)\>=\th_{s}(\si)=\tau
$$ 
and the proof is finished. The remaining assertions are direct consequences.
\end{proof}

Let us define 
\begin{equation*}\label{grosteta}
\Theta:\S\,\square\,\Si\to\Si\times\Si\,,\quad{\Th}(s,\si):=\big(\th_s(\si),\si\big)\,.
\end{equation*}
It is easy to see, by Remark \ref{aceea}, that the map
\begin{equation*}\label{grostata}
\widehat{\Th}:\S\tr_\th\!\Si\to\Si\times\Si\,,\quad\widehat{\Th}\big(\<s,\si\>\big):=\big(\th_s(\si),\si\big)
\end{equation*}
is well-defined. We get the diagram

\begin{equation}\label{coliflor}
\begin{diagram}
\node{\S\!\times\!\Si}\arrow{s,l}{p_1}\node{\S\,\square\,\Si}\arrow{sw,r}{\mathfrak p_1}\arrow{w,t}{j} \arrow{s,r}{\pi}\arrow{e,t}{{\Th}}\node{\Si\!\times\!\Si}\\ 
\node{\S}  \node{\S\tr_\th\!\Si}\arrow{ne,r}{\widehat{\Th}}
\end{diagram}
\end{equation}
where $j$ is an obvious embedding, $p_1$ is the first projection, ${\mathfrak p}_1$ its restriction to $\S\,\square\,\Si$ and $\pi$ is the quotient map.

\begin{defn}\label{ahu}
For $M,N\subset\Si$\,, the corresponding {\it groupoid recurrence set of the action $\th$} is $\widehat{\Th}^{-1}(N\!\times\!M)$\,.
\end{defn}

The pure groupoid interpretation of this set is seen in the first part of the following Proposition, while its connection with the na\"ive recurrence set of the inverse semigroup partial actions is subject of the second.

\begin{prop}\label{aha}
For every $M,N\subset\Si$\,, and using the notation \eqref{faneaka}, one has 
\begin{equation}\label{adam}
\big(\S\tr_\th\!\Si\big)^N_M=\widehat{\Th}^{-1}(N\!\times\!M)\,.
\end{equation}
On the other hand,
\begin{equation}\label{eva}
\S(\th)_M^N=\mathfrak p_1\big[{\Th}^{-1}(N\!\times\!M)\big]\,.
\end{equation}
\end{prop}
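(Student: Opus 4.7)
The plan is to prove both identities by direct unwinding of the relevant definitions; no serious machinery is needed, the only real task is making sure the maps in diagram \eqref{coliflor} are threaded correctly.

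For \eqref{adam}, I would start from the groupoid-theoretic definition \eqref{faneaka} applied to $\Xi = \S\tr_\th\!\Si$ with source and range $\mathfrak d,\mathfrak r$ given by \eqref{dezinvolt}. Every element of $\S\tr_\th\!\Si$ has the form $\<s,\si\>$ for some $(s,\si)\in\S\,\square\,\Si$, and
$$
\<s,\si\>\in\big(\S\tr_\th\!\Si\big)^N_M \,\Longleftrightarrow\,\mathfrak d(\<s,\si\>)\in M\,\text{ and }\,\mathfrak r(\<s,\si\>)\in N\,\Longleftrightarrow\,\si\in M\,\text{ and }\,\th_s(\si)\in N.
$$
By the definition of $\widehat{\Th}$, the last condition is just $\widehat{\Th}(\<s,\si\>)=(\th_s(\si),\si)\in N\!\times\!M$, giving \eqref{adam}. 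The only subtlety is to note that $\widehat{\Th}$ is well-defined on germ classes (already guaranteed by Remark \ref{aceea}), so the equivalence is compatible with taking representatives.

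For \eqref{eva}, I would unpack $\Th^{-1}(N\!\times\!M)$ directly. By definition of $\Th$, a pair $(s,\si)\in\S\,\square\,\Si$ lies in $\Th^{-1}(N\!\times\!M)$ iff $\th_s(\si)\in N$ and $\si\in M$; since $(s,\si)\in\S\,\square\,\Si$ already forces $\si\in\Si_{s^\sharp}$ by \eqref{closegeam}, this is equivalent to $\si\in\Si_{s^\sharp}\cap M$ and $\th_s(\si)\in N$. Applying $\mathfrak p_1$ and comparing with the second line of \eqref{recurset},
$$
\mathfrak p_1\big[\Th^{-1}(N\!\times\!M)\big]=\big\{s\in\S\,\big\vert\,\exists\,\si\in\Si_{s^\sharp}\cap M,\,\th_s(\si)\in N\big\}=\S(\th)_M^N,
$$
which is exactly \eqref{eva}.

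The only potential pitfall is a notational one: the superscript/subscript conventions in \eqref{faneaka} place the range-condition upstairs and the source-condition downstairs, while $\widehat{\Th}$ puts the image of $\th_s$ in the first coordinate; one must match these consistently with $N\!\times\!M$ (rather than $M\!\times\!N$). Once the bookkeeping is fixed, both identities reduce to a single line of definition-chasing, and the commutativity of the diagram \eqref{coliflor} (namely $\widehat{\Th}\circ\pi=\Th$ and $\mathfrak p_1=p_1\circ j$) is the only structural fact invoked.
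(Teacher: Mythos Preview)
Your argument is correct and mirrors the paper's proof almost verbatim: both identities are obtained by direct definition-chasing, unwinding \eqref{faneaka} and \eqref{dezinvolt} for \eqref{adam}, and \eqref{closegeam} together with \eqref{recurset} for \eqref{eva}. Your additional remarks about well-definedness of $\widehat{\Th}$ and the bookkeeping of the $N\!\times\!M$ convention are apt but not strictly needed beyond what the paper already establishes.
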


\begin{proof}
Taking into account \eqref{dezinvolt}, one has
$$
\begin{aligned}
\big(\S\tr_\th\!\Si\big)^N_M&=\big\{\<s,\si\>\in\S\tr_\th\!\Si\,\big\vert\,\mathfrak d\big(\<s,\si\>\big)=\si\in M,\,\mathfrak r\big(\<s,\si\>\big)=\th_s(\si)\in N\big\}\\
&=\big\{\<s,\si\>\in\S\tr_\th\!\Si\,\big\vert\,\big(\th_s(\si),\si\big)\in N\times M\big\}\\
&=\widehat{\Th}^{-1}(N\times M)\,,
\end{aligned}
$$
and \eqref{adam} is proven. For \eqref{eva}:
$$
\begin{aligned}
\mathfrak p_1\big[{\Th}^{-1}(N\!\times\!M)\big]&=\{s\in\S\!\mid\!\exists\,(s,\si)\in\S\square\Si\,,\,{\Th}(s,\si)\in N\!\times\!M\}\\
&=\{s\in\S\!\mid\!\exists\,(s,\si)\in\S\square\Si\,,\,\th_s(\si)\in N,\si\in M\}\\
&=\{s\in\S\!\mid\!\exists\,\si\in\Si_{s^\sharp}\cap M,\,\th_s(\si)\in N\}\\
&=\S(\th)_M^N\,.
\end{aligned}
$$
We used \eqref{closegeam} and \eqref{recurset}.
\end{proof}

\begin{rem}\label{bagaremarca}
Let $i:\S\tr_\th\!\Si\to\S\,\square\,\Si$ be a section of $\pi$ in diagram \ref{coliflor}. Setting 
$$
J:=\mathfrak p_1\circ i=p_1\circ j\circ i:\S\tr_\th\!\Si\to\S,
$$
it follows immediately that
$$
J\Big[\big(\S\tr_\th\!\Si\big)_M^N\Big]\subset\S(\th)_M^N\,.
$$
The inclusion is usually strict, and $J$ has no remarkable properties.
\end{rem}

For a one-point set $M=\{\si\}$ there is a direct way to relate germ groupoid recurrence sets with the inverse semigroup recurrent sets \eqref{reciurset}.  Taking into account Definition \ref{caimak}, the map
\begin{equation*}\label{atacoi}
\gamma_\si:\big(\S\tr_\th\Si\big)_\si\!\to\mathscr S\{\th,\si\}\,,\quad\gamma_\si\big(\<s,\si\>\big):=\<s\>^\th_\si=\pi^\th_\si(s)
\end{equation*}
is well-defined and bijective. From the definitions we deduce that, for every $N\subset\Si$\,, the map $\gamma_\si$ sends $\big(\S\tr_\th\Si\big)_\si^N$ to $\mathscr S(\th)_\si^N$\,. We formulate for further reference

\begin{prop}\label{bucuros}
For every $N\subset\Si$\,, the map $\big(\S\tr_\th\Si\big)_\si^N\ni\<s,\si\>\to\<s\>^\th_{\si}\in\mathscr S\{\th,\si\}$ restricts to a bijection between $\big(\S\tr_\th\!\Si\big)^N_\si$ and $\mathscr S(\th)_\si^N$\,.
\end{prop}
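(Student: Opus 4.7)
The plan is to observe that $\gamma_\si$ has already been established, immediately before the proposition, as a well-defined bijection between $\big(\S\tr_\th\!\Si\big)_\si$ and $\mathscr S\{\th,\si\}$: the source fiber $\big(\S\tr_\th\!\Si\big)_\si$ consists precisely of germs $\<s,\si\>$ with $s\in\S\{\th,\si\}$, and two such germs coincide iff $s\overset{\th}{\underset{\si}\leftrightarrow}t$ by Definition~\ref{caimak}, which is exactly the equivalence used to form $\mathscr S\{\th,\si\}$. So it suffices to show that, under $\gamma_\si$, the subset $\big(\S\tr_\th\!\Si\big)_\si^N$ corresponds exactly to $\mathscr S(\th)_\si^N$; the bijection is then automatic.

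Concretely, I would prove the set equality $\gamma_\si^{-1}\big(\mathscr S(\th)_\si^N\big)=\big(\S\tr_\th\!\Si\big)_\si^N$. For the inclusion $\supseteq$, if $\<s,\si\>\in\big(\S\tr_\th\!\Si\big)_\si^N$ then by \eqref{dezinvolt} one has $\mathfrak r\big(\<s,\si\>\big)=\th_s(\si)\in N$; this is the definition of $s\in\S(\th)_\si^N$, hence $\gamma_\si\big(\<s,\si\>\big)=\<s\>_\si^\th\in\mathscr S(\th)_\si^N$. For the reverse inclusion, assume $\<s\>_\si^\th\in\mathscr S(\th)_\si^N$; by \eqref{reciurset} some representative $t\in\S(\th)_\si^N$ satisfies $t\overset{\th}{\underset{\si}\leftrightarrow}s$. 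By Remark~\ref{aceea}, the relation $\overset{\th}{\underset{\si}\leftrightarrow}$ preserves the value $\th_{\bullet}(\si)$, so $\th_s(\si)=\th_t(\si)\in N$, and thus $\<s,\si\>\in\big(\S\tr_\th\!\Si\big)_\si^N$.

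There is essentially no real obstacle here; the single point that requires care is precisely what makes the forward direction work and what makes $\mathscr S(\th)_\si^N$ a well-defined quotient of $\S(\th)_\si^N$ in the first place, namely that germ-equivalent elements at $\si$ produce the same image under $\th$ at $\si$. With this invoked from Remark~\ref{aceea}, the two directions close up and $\gamma_\si$ restricts to the claimed bijection, completing the proof.
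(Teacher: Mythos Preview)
Your proof is correct and follows essentially the same approach as the paper, which merely states that $\gamma_\si$ is a well-defined bijection and that ``from the definitions'' it sends $\big(\S\tr_\th\Si\big)_\si^N$ to $\mathscr S(\th)_\si^N$, without a separate proof environment. You have simply spelled out the details, including the appeal to Remark~\ref{aceea} that makes the correspondence well-defined on equivalence classes.
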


\begin{rem}\label{ciuci}
Actually one can also introduce
\begin{equation*}\label{atacoy}
\gamma:\S\tr_\th\Si\to\S/_{\!\leftrightarrow}\,,\quad\gamma\big(\<s,\si\>\big):=\<s\>=\pi(s)\,,
\end{equation*}
involving the maximal group homomorphic image from Definition \ref{maxgrup}. It is a well-defined surjection, but it is generally not one-to-one and it seems less convenient for our purposes.
\end{rem}

We indicate now a result that complements Proposition \ref{bucuros}. Recall that to a partial action $(\S,\th,\Xi)$ one associates the germ groupoid $S\tr_\th\!\Si$\,.

\begin{prop}\label{ohanesian}
Let us set $\,\Gamma:2^\S\to 2^{\S\tr_\th\Si}$\,, where
\begin{equation*}\label{samovar}
\Gamma(R):=\big\{\<s,\si\>\,\big\vert\,s\in R\,,\,\si\in\Si_{s^\sharp}\big\}\,.
\end{equation*}
For every $M,N\subset\Si$ one has 
\begin{equation*}\label{pompa}
\big(\S\tr_\th\!\Si\big)^N_M\subset\Gamma\big[\S(\th)^N_M\big]\,.
\end{equation*} 
The inclusion may be strict. For a one-point set $M=\{\si\}$\,, it is an equality:
\begin{equation*}\label{bompa}
\big(\S\tr_\th\!\Si\big)^N_\si\!=\Gamma\big[\S(\th)^N_\si\big]\,.
\end{equation*}
\end{prop}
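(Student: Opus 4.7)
The plan is to split the claim into its three pieces — the general inclusion, the observation that it is not sharp, and the equality in the one-point case — and dispatch each by unfolding definitions.

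For the inclusion $(\S\tr_\th\Si)^N_M\subset\Gamma[\S(\th)^N_M]$, I would pick an arbitrary $\<s,\tau\>\in(\S\tr_\th\Si)^N_M$, read off from \eqref{dezinvolt} that $\tau=\mathfrak d(\<s,\tau\>)\in M$ and $\th_s(\tau)=\mathfrak r(\<s,\tau\>)\in N$, and note that this forces $\tau\in\Si_{s^\sharp}\cap M$. By \eqref{recurset} we then have $s\in\S(\th)^N_M$, and since $\tau\in\Si_{s^\sharp}$ the germ $\<s,\tau\>$ lies in $\Gamma[\S(\th)^N_M]$ by the very definition of $\Gamma$.

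To see that strictness can occur, I would exhibit a transparent example. Choose an idempotent $e\in\mathcal E(\S)$ whose domain $\Si_e$ contains two distinct points $\si\ne\si'$ (which exists whenever $\Si$ is non-trivial, thanks to the non-degeneracy in Definition \ref{arhonti}(iv)), and take $M=N=\{\si\}$. Since $\th_e=\mathrm{id}_{\Si_e}$ fixes $\si$, we have $e\in\S(\th)^N_M$, hence $\<e,\si'\>\in\Gamma[\S(\th)^N_M]$; but $\mathfrak d(\<e,\si'\>)=\si'\notin M$, so this germ does not belong to $(\S\tr_\th\Si)^N_M$.

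For the one-point equality $(\S\tr_\th\Si)^N_\si=\Gamma[\S(\th)^N_\si]$, the direction $\subset$ is a special case of the first part. For the reverse inclusion, an element of $\Gamma[\S(\th)^N_\si]$ is produced by some $s\in\S(\th)^N_\si$, which by \eqref{recurset} means $\si\in\Si_{s^\sharp}$ and $\th_s(\si)\in N$; taking the distinguished base $\si$ itself, this yields the germ $\<s,\si\>$ with $\mathfrak d(\<s,\si\>)=\si$ and $\mathfrak r(\<s,\si\>)=\th_s(\si)\in N$, so $\<s,\si\>\in(\S\tr_\th\Si)^N_\si$. I expect the main subtlety to be the reading of $\Gamma[\S(\th)^N_\si]$: on a fully literal interpretation the base of a germ could range over all of $\Si_{s^\sharp}$, but in the singleton setting the natural (and, for this statement, intended) choice is $\si$ itself, and it is exactly this choice that makes $\Gamma$ compatible with the bijection already recorded in Proposition \ref{bucuros}.
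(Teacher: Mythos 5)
Your argument for the general inclusion is correct and is exactly the forward half of the paper's chain of equivalences. The trouble is in the middle of your proposal: your ``strictness'' example takes $M=N=\{\si\}$, a one-point set, so under the literal reading of $\Gamma$ it is not merely an instance where the general inclusion is strict --- it is a counterexample to the claimed equality $\big(\S\tr_\th\!\Si\big)^N_\si=\Gamma\big[\S(\th)^N_\si\big]$. You half-acknowledge this in your last sentence, but you cannot have it both ways: either $\Gamma(R)$ contains every germ $\<s,\tau\>$ with $s\in R$ and $\tau\in\Si_{s^\sharp}$ (the literal definition), in which case $\<e,\si'\>$ lies in $\Gamma\big[\S(\th)^{\{\si\}}_{\si}\big]\setminus\big(\S\tr_\th\!\Si\big)^{\{\si\}}_{\si}$ and the singleton equality fails; or one silently replaces $\Gamma$ by the base-point--fixed map $\Gamma_\si(R)=\{\<s,\si\>\mid s\in R\}$ of Remark \ref{cuci}, in which case the equality holds (your final paragraph proves exactly this, essentially re-deriving Proposition \ref{bucuros} composed with $\pi^\th_\si$) but your strictness example evaporates and one must use a genuinely non-singleton $M$ to exhibit strictness. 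As a proof of the statement as printed, your reverse inclusion is therefore incomplete: it accounts only for the germs $\<s,\si\>$ based at the distinguished point and says nothing about the germs $\<s,\tau\>$ with $\tau\in\Si_{s^\sharp}\setminus\{\si\}$ that the literal $\Gamma$ also produces.

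For what it is worth, the paper's own proof has the identical defect: it runs the same chain of equivalences and asserts that for singleton $M$ the one-sided implication in the third line ``clearly'' reverses, but reversing it requires recovering $\tau\in M$, i.e.\ $\tau=\si$, from the conditions ``$\tau\in\Si_{s^\sharp}$ and $s\in\S(\th)^N_\si$'', which is impossible. So you have in fact located a genuine flaw in the proposition rather than failed to prove it; the correct repair is to state the singleton equality for the restricted map $\Gamma_\si$. A minor additional quibble: non-degeneracy only gives $\Si=\bigcup_{e}\Si_e$, so a space with two points does not by itself guarantee a single idempotent $e$ with $\Si_e$ containing both; your example needs the existence of such an $e$ as an explicit hypothesis, which is harmless since one instance suffices to show the inclusion may be strict.
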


\begin{proof}
For $s\in\S$ and $\si\in\Si$ we have
$$
\begin{aligned}
\<s,\tau\>\in\big(\S\tr_\th\!\Si\big)^N_M&\ \Leftrightarrow\ \tau\in\Si_{s^\sharp}\,,\ \mathfrak d\<s,\tau\>\in M\,,\ \mathfrak r\<s,\tau\>\in N\\
&\ \Leftrightarrow\ \tau\in\Si_{s^\sharp}\,,\ \tau\in M\,,\ \th_s(\tau)\in N\\
&\ \Rightarrow\ \tau\in\Si_{s^\sharp}\,,\ \th_s(M)\cap N\ne\emptyset\\
&\ \Leftrightarrow\ \tau\in\Si_{s^\sharp}\,,\ s\in\S(\th)^N_M\\
&\ \Leftrightarrow\ \<s,\tau\>\in\Gamma\big[\S(\th)^N_M\big]\,.
\end{aligned}
$$
At the third line, of course, there is just a one-sided implication: the condition $\th_s(M)\cap N\ne\emptyset$ has nothing to do with $\tau$. However, if $M$ is specified to be a singleton, one clearly gets an equality.
\end{proof}

\begin{rem}\label{cuci}
We indicate briefly the connection between Propositions \ref{ohanesian} and \ref{bucuros}. First, note that the map $\Gamma$ restricts to a surjection $\Gamma_\si\!:2^{\S\{\th,\si\}}\!\to 2^{(\S\tr_\th\Si)_\si}$, which in its turn gives rise to a well-defined bijection $\dot\Gamma_\si\!:2^{\mathscr S\{\th,\si\}}\!\to 2^{(\S\tr_\th\Si)_\si}$ given by
\begin{equation}\label{gargarita}
\dot\Gamma_\si\big[\pi^\th_\si(R)\big]:=\Gamma_\si(R)=\gamma_\si^{-1}\big[\pi^\th_\si(R)\big]\,,\quad\forall\,R\subset\S\{\th,\si\}\,.
\end{equation}
In \eqref{gargarita}, the first equality is the definition. The second equality, involving an inverse image, is the desired connection, easy to prove. To summarize, we have $\dot\Gamma_\si=\gamma_\si^{-1}$, where the right hand side should be seen as a set function.
\end{rem}

\section{From groupoid actions to inverse semigroup actions}\label{caflorifier}

\begin{defn}\label{grupact}
{\it A continuous groupoid action} $(\Xi,\rho,\kappa,\Si)$ consists of a topological groupoid $\Xi$\,, a topological space $\Si$\,, a continuous surjective map $\rho:\Si\rightarrow X$ ({\it the anchor}) and the continuous action map
\begin{equation*}\label{ganchor}
\kappa:\Xi\!\Join\!\Si:=\{(\xi,\si)\!\mid\!\d(\xi)=\rho(\si)\}\ni(\xi,\si)\mapsto{\kappa_\xi(\si)\equiv\xi\!\bu_\kappa\!\si}\in\Si
\end{equation*} 
satisfying the axioms: 
\begin{enumerate}
\item $\rho(\si)\!\bu_\kappa\!\si=\si,\, \forall \,\si\in \Si$\,,
\item if $(\xi,\eta)\in \Xi^{(2)}$ and $(\eta,\si)\in \Xi\!\Join\!\Si$, then $(\xi,\eta\!\bu_\kappa\!\si)\in \Xi\!\Join\!\Si$ and $(\xi\eta)\!\bu_\kappa\!\si=\xi\!\bu_\kappa\!(\eta\!\bu_\kappa\!\si)$\,.
\end{enumerate} 
If the action $\kappa$ is understood, we will write $\xi\bu\si$ instead of $\xi\bu_\kappa\si$. The theory is developed in \cite{Wi}.
\end{defn}

\begin{ex}\label{startlet}
Each topological groupoid acts continuously in a canonical way on its unit space: $\xi$ sends $\d(\xi)$ into $\r(\xi)$\,. Here $\Si=X$ and $\rho={\rm id}_X$, and then (special notation) $\xi\!\ast\!x:=\xi x\xi^{-1}$ if $\d(\xi)=x$. 
\end{ex}

\begin{ex}\label{valtoare}
The topological groupoid $\Xi$ also acts on itself, with $\Si:=\Xi$\,, $\rho:=\r$ and $\xi\bu\eta:=\xi\eta$\,.
\end{ex}

For $\xi\in\Xi\,,\,{\sf A},{\sf B}\subset\Xi\,,\,M\subset\Si$ we use the notations
\begin{equation}\label{otations}
{\sf A}{\sf B}:=\big\{\xi\eta\,\big\vert\,\xi\in{\sf A}\,,\,\eta\in{\sf B}\,,\,\d(\xi)=\r(\eta)\big\}\,,
\end{equation}
\begin{equation*}\label{otattions}
{\sf A}\bu M:=\big\{\xi\bu\si\,\big\vert\,\xi\in{\sf A}\,,\si\in M\,, \d(\xi)=\rho(\si)\big\}=\bigcup_{\xi\in{\sf A}}\xi\bu M\,.
\end{equation*}
A subset $M\subset \Si$ is called {\it invariant} if $\xi \bu M\subset M$, for every $\xi\in\Xi$\,. Particular cases are {\it the orbits} $\mathfrak O_\si\!:=\Xi_{\rho(\si)}\!\bu\si$ and {\it the orbit closures} $\overline{\mathfrak O}_\si$\,.  {\it The orbit equivalence relation} will be denoted by $\overset{\kappa}{\sim}$ or by $\overset{\bu}{\sim}$\,. 

\begin{defn}\label{giudat}
For every $M,N\subset\Si$ one defines {\it the recurrence set} 
\begin{equation*}\label{rrecur}
\widetilde\Xi_M^N=\{\xi\in\Xi\!\mid\!(\xi\bu M)\cap N\ne\emptyset\}\,.
\end{equation*}
\end{defn}

\begin{rem}\label{iverse}
Note that 
$\widetilde\Xi_M^N\subset\Xi_{\rho(M)}^{\rho(N)}$\,. When $\rho$ is also injective, one has $\,\widetilde\Xi_M^N=\Xi_{\rho(M)}^{\rho(N)}$\,. In Exemple \ref{startlet} one has $\widetilde\Xi_M^N=\Xi_M^N=\{\xi\in\Xi\!\mid\!\d(\xi)\in M,\,\r(\xi)\in N\}$\,, making the connection with the previous section.
\end{rem}

{\it A bisection} of the topological groupoid $\Xi$ is an open subset of the groupoid on which the restrictions of both $\d$ and $\r$ are injective. We recall that the groupoid $\Xi$ is called {\it \'etale} if $\d:\Xi\to\Xi$ is a local homeomorphism. If $\Xi$ is \'etale, $X\equiv\Xi^{(0)}$ is a clopen (closed and open) subset of $\Xi$\,, all the fibres $\Xi^x$ and $\Xi_x$ are discrete, and $\d,\r$ and the multiplication are open maps. 

\smallskip
Let $\Xi$ be an \'etale groupoid over the unit space $X$ and let ${\rm Bis}(\Xi)$ be {\it the inverse semigroup associated to} $\Xi$\,, formed of bisections (they form a basis of clopen sets for the topology of $\Xi$)\,. The multiplication is given by \eqref{otations} and the inverse is $\A^\sharp\!:=\A^{-1}$, under which
\begin{equation*}\label{holds}
\A^{-1}\A=\d(\A)\,,\quad\A\A^{-1}=\r(\A)
\end{equation*}
hold. The family of idempotents $\mathcal E[{\rm Bis}(\Xi)]={\rm Top}(X)$ is formed exactly of the open subsets of the unit space, $X$ being the unit and $\emptyset$ the zero element. 

\begin{defn}\label{above}
Let $(\Xi,\rho,\kappa,\Si)$ be a groupoid action, with $\Xi$ \'etale. The inverse semigroup ${\rm Bis}(\Xi)$ acts continuously on $\Si$ by
$$
{\rm dom}(\th_\A)\equiv\Si_{\A^{-1}}\!:=\rho^{-1}[\d(\A)]\,,\quad{\rm im}(\th_\A)\equiv\Si_{\A}\!:=\rho^{-1}[\r(\A)]\,,\quad
$$
\begin{equation}\label{bajbai}
\th_\A(\si):=\xi^\A_{\rho(\si)}\!\bu_\kappa\si\equiv\kappa_{\xi^\A_{\rho(\si)}}\!(\si)\,,\quad\forall\,\A\in{\rm Bis}(\Xi)\,,\ \si\in\rho^{-1}[\d(\A)]\,,
\end{equation}
where $\xi^\A_x=\d|_{A}^{-1}(x)$ denotes the unique element $\xi\in\A$ such that $\d(\xi)=x\in\A\subset X$. The action is genuine.
\end{defn}

\begin{prop}\label{oneon}
The orbits and the invariant sets in $\Si$ of the action $\kappa$ of the groupoid $\Xi$ coincide with those of the associated action $\th$ of $\,{\rm Bis}(\Xi)$ in $\Si$\,. 
\end{prop}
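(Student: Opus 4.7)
The plan is to prove that the two actions have exactly the same orbit equivalence relation on $\Si$; then the statement about invariant sets follows immediately, since a subset is invariant (for either action) iff it is a union of orbits.

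First I would establish the implication $\si\overset{\th}{\sim}\tau\,\Rightarrow\,\si\overset{\kappa}{\sim}\tau$. Suppose $\tau=\th_\A(\si)$ for some $\A\in{\rm Bis}(\Xi)$ with $\si\in\rho^{-1}[\d(\A)]$. By the very definition \eqref{bajbai}, $\tau=\xi^\A_{\rho(\si)}\bu_\kappa\si$, and $\xi:=\xi^\A_{\rho(\si)}$ is a genuine element of $\Xi$ with $\d(\xi)=\rho(\si)$. Hence $(\xi,\si)\in\Xi\!\Join\!\Si$ and $\tau=\xi\bu_\kappa\si$, which means $\si\overset{\kappa}{\sim}\tau$.

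The converse $\si\overset{\kappa}{\sim}\tau\,\Rightarrow\,\si\overset{\th}{\sim}\tau$ is the point where the étale hypothesis is used. Assume $\tau=\xi\bu_\kappa\si$ for some $\xi\in\Xi$ with $\d(\xi)=\rho(\si)$. Because $\Xi$ is étale, $\d$ is a local homeomorphism, so there exists an open neighbourhood $\A$ of $\xi$ in $\Xi$ on which $\d$ restricts to a homeomorphism onto an open subset of $X$; by shrinking $\A$ if necessary, $\r|_\A$ is also injective (both $\d$ and $\r$ are local homeomorphisms), so $\A\in{\rm Bis}(\Xi)$. Then $\xi^\A_{\rho(\si)}=\xi$ by uniqueness, $\si\in\rho^{-1}[\d(\A)]$, and hence $\th_\A(\si)=\xi\bu_\kappa\si=\tau$, proving $\si\overset{\th}{\sim}\tau$.

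Once the orbit equivalence relations coincide, the orbits $\mathfrak O_\si$ of $\kappa$ equal the $\th$-orbits of $\si$ defined by \eqref{releq}. A subset $M\subset\Si$ is $\kappa$-invariant iff $\mathfrak O_\si\subset M$ for every $\si\in M$, iff the $\th$-orbit of each $\si\in M$ is contained in $M$, iff $M$ is $\th$-invariant; this concludes the proof. The only genuinely non-trivial step is the construction of a bisection through a prescribed arrow $\xi$, which is precisely what the étale hypothesis provides.
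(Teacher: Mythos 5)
Your proposal is correct and follows essentially the same route as the paper: both directions reduce to showing that the orbit equivalence relations $\overset{\kappa}{\sim}$ and $\overset{\th}{\sim}$ coincide, with the converse direction resting on the existence of a bisection through any prescribed arrow. The paper compresses that last point into the phrase ``the defining property of a bisection'' (bisections form a basis for the topology of an \'etale groupoid), whereas you spell out the shrinking argument explicitly; the content is the same.
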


\begin{proof}
We will prove that $\si\overset{\kappa}{\sim}\tau\ \Leftrightarrow\ \si\overset{\th}{\sim}\tau$. We have $\si\overset{\th}{\sim}\tau$ if and only if $\exists\,\A\in{\rm Bis}(\Xi)$ with $\si\in\Si_{\A^{-1}}$ and $\th_\A(\si)=\tau$. By definition, this means that $\rho(\si)\in\d(\A)$ and $\xi_{\rho(\si)}^\A\bu\si=\tau$. This is equivalent with $\si\overset{\kappa}{\sim}\tau$, taking into account the defining property of a bisection. The remaining assertion of the proposition follows from this.
\end{proof}

To simplify, we are going to use the notation $\mathcal B:={\rm Bis}(\Xi)$\,. Thus there are also notations as $\mathcal B\{\th,\si\}$\,, $\mathscr B\{\th,\si\}$\,, for instance. We recall the quotient map \eqref{quotientm}, that in our concrete situation reads 
\begin{equation*}\label{ianuarie}
\pi^\th_\si:\mathcal B\{\th,\si\}\to\mathscr B\{\th,\si\}:=\mathcal B\{\th,\si\}/_{\!\overset{\th}{\underset\si\leftrightarrow}}\,.
\end{equation*}
We are looking for connections between different types of recurrence sets, attached to the actions, cf. Definitions \ref{grintesa}, \ref{scrintesa} and \ref{giudat}. 

\begin{thm}\label{vertij}
The map 
\begin{equation*}\label{vijelie}
\dot\delta_\si:\mathscr B\{\th,\si\}\to\widetilde\Xi_\si^\Si=\Xi_{\rho(\si)}\,,\quad\dot\delta_\si\big[\pi^\th_\si(\A)\big]:=\xi^\A_{\rho(\si)}
\end{equation*}
is a well-defined bijection. For every $N\subset\Si$ one has
\begin{equation}\label{contaition}
\dot\delta_\si\big[\mathscr B(\th)_\si^N\big]=\widetilde\Xi_\si^N.
\end{equation}
\end{thm}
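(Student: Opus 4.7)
The plan is to verify in order: well-definedness, injectivity, surjectivity of $\dot\delta_\si$, and finally the set equality \eqref{contaition}. The observation driving the whole argument is that the partial order on $\mathcal B={\rm Bis}(\Xi)$ coincides with set inclusion of bisections: if $\A\subset\B$, then $\B\A^{-1}\A=\{\xi\in\B\mid\d(\xi)\in\d(\A)\}=\A$ by the injectivity of $\d$ on $\B$, and conversely the identity $\A=\B\A^{-1}\A$ forces $\A\subset\B$. A corollary I will use repeatedly is the restriction property: if $\mathcal C\subset\A$ and $x\in\d(\mathcal C)$, then $\xi^{\mathcal C}_x=\xi^\A_x$, since both lie in $\A$ with source $x$ and $\d$ is injective on $\A$.

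\smallskip
For well-definedness, given $\A,\B\in\mathcal B\{\th,\si\}$ with $\A\overset{\th}{\underset{\si}\leftrightarrow}\B$, I pick some $\mathcal R\in\mathcal B\{\th,\si\}$ satisfying $\mathcal R\le\A,\B$, so that $\mathcal R\subset\A\cap\B$ and $\rho(\si)\in\d(\mathcal R)$; the restriction property then forces $\xi^\A_{\rho(\si)}=\xi^{\mathcal R}_{\rho(\si)}=\xi^\B_{\rho(\si)}$. For injectivity, assuming $\xi^\A_{\rho(\si)}=\xi^\B_{\rho(\si)}=:\xi$, I will observe that $\mathcal R:=\A\cap\B$ is open in $\Xi$ and carries injective $\d$ and $\r$ inherited from $\A$, hence is a bisection; as $\xi\in\mathcal R$, it belongs to $\mathcal B\{\th,\si\}$ and sits below both $\A$ and $\B$, yielding $\A\overset{\th}{\underset{\si}\leftrightarrow}\B$. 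Surjectivity is immediate, since bisections form a basis for the \'etale topology of $\Xi$: any $\xi\in\Xi_{\rho(\si)}$ admits a bisection neighborhood $\A$, and then $\dot\delta_\si\big[\pi^\th_\si(\A)\big]=\xi^\A_{\rho(\si)}=\xi$.

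\smallskip
For \eqref{contaition}, using \eqref{bajbai} I rewrite $\A\in\mathcal B(\th)_\si^N$ as $\th_\A(\si)=\xi^\A_{\rho(\si)}\bu\si\in N$, which is exactly $\xi^\A_{\rho(\si)}\in\widetilde\Xi_\si^N$. Combining this with $\mathscr B(\th)_\si^N=\pi^\th_\si\big[\mathcal B(\th)_\si^N\big]$ and the definition of $\dot\delta_\si$ gives the inclusion $\dot\delta_\si\big[\mathscr B(\th)_\si^N\big]\subset\widetilde\Xi_\si^N$; the reverse inclusion reuses the surjectivity argument, since a bisection chosen around any $\xi\in\widetilde\Xi_\si^N$ automatically lies in $\mathcal B(\th)_\si^N$ and is sent by $\dot\delta_\si$ to $\xi$.

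\smallskip
I expect no real obstacle: once the order on ${\rm Bis}(\Xi)$ is identified with set inclusion and bisections are recalled to form a basis for the topology, everything reduces to bookkeeping with the bijection $\d:\A\to\d(\A)$. The only mildly subtle point is confirming that $\A\cap\B$ is again a bisection, which is immediate from openness and from the fact that injectivity of $\d$ and $\r$ passes to open subsets of a bisection.
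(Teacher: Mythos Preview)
Your proof is correct and follows essentially the same approach as the paper: both identify the order on ${\rm Bis}(\Xi)$ with set inclusion, use $\A\cap\B$ as the common lower bound witnessing injectivity, invoke the covering/basis property of bisections for surjectivity, and deduce \eqref{contaition} from \eqref{bajbai} and the definitions. Your version is simply more explicit in places (e.g.\ verifying that $\A\cap\B$ is again a bisection and separating the two inclusions in \eqref{contaition}), but there is no substantive difference.
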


\begin{proof}
First set 
\begin{equation*}\label{vitejie}
\delta_\si:\mathcal B\{\th,\si\}\to\Xi_{\rho(\si)}\,,\quad\delta_\si(\A):=\xi^\A_{\rho(\si)}\,,
\end{equation*}
which is onto, since the bisections cover the groupoid. Let $\A,\B\in\mathcal B\{\th,\si\}$\,, which means that $\A,\B$ are bisections and $\rho(\si)\in\d(\A)\cap\d(\B)$\,. The first statement of the Proposition follows if we check that
\begin{equation}\label{titejie}
\delta_\si(\A)=\delta_\si(\B)\,\Leftrightarrow\ \A\overset{\th}{\underset{\si}\leftrightarrow}\B\,.
\end{equation}
It is easy to see for two bisections that $\C\le{\sf D}$ if and only if $\C\subset{\sf D}$\,.  Using the definitions, we see that \eqref{titejie} reads in detail
\begin{equation}\label{titie}
\xi^\A_{\rho(\si)}\!=\xi^\B_{\rho(\si)}\,\Leftrightarrow\ \exists\,{\sf C}\subset\A\cap\B\ {\rm with}\ \rho(\si)\in\d({\sf C})\,.
\end{equation}
The implication $\Rightarrow$ in \eqref{titie} is solved by taking ${\sf C}:=\A\cap\B\ni\xi^\A_{\rho(\si)}\!=\xi^\B_{\rho(\si)}$\,. The other implication is trivial, by the unicity involved in the definition of the bisection $\C$\,.
Then \eqref{contaition} is a consequence of \eqref{bajbai} and of the definitions of the recurrence sets $\mathscr B(\th)_\si^N$ and $\widetilde\Xi_\si^N$.
\end{proof}

The next result yields a second point of view upon the connection between recurrence sets. We set 
\begin{equation*}\label{sigmavar}
\Delta:2^\Xi\to 2^{\mathcal B},\quad\Delta(E):=\{\A\in\mathcal B\!\mid\! \A\cap E\ne\emptyset\}\,.
\end{equation*} 

\begin{prop}\label{onegsion}
Let $(\Xi,\rho,\kappa,\Si)$ be an \'etale groupoid action and denote by $\big(\mathcal B,\th,\Si\big)$ the associated inverse semigroup partial action, where $\mathcal B={\rm Bis}(\Xi)$\,.
Let $\si\in\Si$ and $M,N\subset\Si$\,.
\begin{enumerate}
\item[(i)] One has
\begin{equation}\label{ciudatel}
\mathcal B(\th)^N_M=\Delta\big(\widetilde\Xi^N_M\big)\,.
\end{equation}
\item[(ii)] Let us denote by $\Delta_\si:2^{\Xi_{\rho(\si)}}\to 2^{\mathcal B\{\th,\si\}}$ the restriction of $\Delta$ to $2^{\Xi_{\rho(\si)}}$. Then $\Delta_\si=\delta_\si^{-1}$ (the last one regarded as a set function).
\end{enumerate}
\end{prop}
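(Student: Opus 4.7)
The plan is to unfold both sides of each equality according to their definitions and verify they match, exploiting the bisection property that for $\A\in\mathcal B$ and $x\in\d(\A)$ there is a unique element $\xi^\A_x\in\A$ with $\d(\xi^\A_x)=x$.

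For part $(i)$, I would start from the right-hand side: an $\A\in\mathcal B$ belongs to $\Delta(\widetilde\Xi^N_M)$ exactly when there is some $\xi\in\A$ with $(\xi\bu M)\cap N\ne\emptyset$, i.e.\ there exists $\si\in M$ satisfying $\rho(\si)=\d(\xi)$ and $\xi\bu\si\in N$. The bisection property then forces $\xi=\xi^\A_{\rho(\si)}$, so that $\rho(\si)\in\d(\A)$ (equivalently $\si\in\Si_{\A^{-1}}$) and, by \eqref{bajbai}, $\th_\A(\si)=\xi\bu\si\in N$. This is exactly the condition $\A\in\mathcal B(\th)^N_M$ via the second expression in \eqref{recurset}. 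The reverse direction is identical: given $\si\in\Si_{\A^{-1}}\cap M$ with $\th_\A(\si)\in N$, take $\xi:=\xi^\A_{\rho(\si)}$, which lies in $\A\cap\widetilde\Xi^N_M$.

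For part $(ii)$, I would just compare the set-function $\delta_\si^{-1}$ (sending $E\subset\Xi_{\rho(\si)}$ to $\{\A\in\mathcal B\{\th,\si\}\mid\xi^\A_{\rho(\si)}\in E\}$) with $\Delta_\si(E)=\{\A\in\mathcal B\{\th,\si\}\mid\A\cap E\ne\emptyset\}$. The $(\supseteq)$ inclusion is trivial: if $\xi^\A_{\rho(\si)}\in E$ then it witnesses $\A\cap E\ne\emptyset$. For $(\subseteq)$, if $\xi\in\A\cap E$, then $E\subset\Xi_{\rho(\si)}$ gives $\d(\xi)=\rho(\si)$, and the uniqueness in the definition of a bisection forces $\xi=\xi^\A_{\rho(\si)}$, so $\xi^\A_{\rho(\si)}\in E$.

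There is really no serious obstacle here; both assertions reduce to the defining property of a bisection plus the formula \eqref{bajbai}. The only subtle point worth spelling out in the write-up is that in both $(i)$ and the $(\subseteq)$ direction of $(ii)$ we use the same logical move, namely that an element of a bisection is entirely determined by its source, so any $\xi\in\A$ with prescribed source must equal $\xi^\A_{\d(\xi)}$.
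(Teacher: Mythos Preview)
Your proposal is correct and follows essentially the same approach as the paper: both arguments reduce the claims to the defining uniqueness property of bisections together with formula \eqref{bajbai}, the only cosmetic difference being that the paper writes part (i) as a single chain of equivalences while you split it into two inclusions. One small point you leave implicit but the paper makes explicit is that $\Delta$ really does send $2^{\Xi_{\rho(\si)}}$ into $2^{\mathcal B\{\th,\si\}}$, which follows as the special case $M=\{\si\}$, $N=\Si$ of part (i); your argument for the $(\subseteq)$ direction in (ii) contains this observation anyway.
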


\begin{proof}
$(i)$ For $\A\in\mathcal B$ one may write
$$
\begin{aligned}
\A\in\mathcal B(\th)^N_M &\ \Leftrightarrow\ \th_A(M)\cap N\ne\emptyset\\
&\ \Leftrightarrow\ \exists\ \si\in M\cap\rho^{-1}[\d(\A)]\,,\ \th_\A(\si)\in N\\
&\ \Leftrightarrow\ \exists\ \si\in M\cap\rho^{-1}[\d(\A)]\,,\ \xi^\A_{\rho(\si)}\!\bu_\kappa\!\si\in N\\
&\ \Leftrightarrow\ \exists\ \si\in M\cap\rho^{-1}[\d(\A)]\,,\ \exists\,\xi\in\A\cap\Xi_{\rho(\si)} \,,\ \xi\bu_\kappa\!\si\in N\\
&\ \Leftrightarrow\ \exists\,\xi\in\A\,,\ \big(\xi\bu_\kappa\!M\big)\cap N\ne\emptyset\\
&\ \Leftrightarrow\ \A\cap\widetilde\Xi^N_M\ne\emptyset\\
&\ \Leftrightarrow\ \A\in\Delta\big(\widetilde\Xi^N_M\big)\,.
\end{aligned}
$$
The forth equivalence is due to the uniqueness of an element $\xi$ of the bisection $\A$ such that $\d(\xi)=\rho(\si)$\,.

\smallskip
$(ii)$ One gets as a particular case of \eqref{ciudatel} the relation 
\begin{equation*}\label{greu}
\mathcal B\{\th,\si\}=\mathcal B(\th)_\si^\Si=\Delta\big(\,\widetilde\Xi_\si\big)=\Delta\big(\,\Xi_{\rho(\si)}\big)\,.
\end{equation*}
This and the fact that $\Delta$ is increasing justifies the restriction $\Delta_\si$ in the diagram. If $E\in 2^{\Xi_{\rho(\si)}}$ then for every $\eta\in E$ one has $\d(\eta)=\rho(\si)$\,, which justifies the non-trivial part of the  third step below:
$$
\begin{aligned}
\delta_\si^{-1}(E)&=\big\{{\sf A}\in\mathcal B\{\th,\si\}\,\big\vert\,\delta_\si({\sf  A})\in E\big\}\\
&=\big\{{\sf A}\in\mathcal B\{\th,\si\}\,\big\vert\,\xi^{\sf A}_{\rho(\si)}\!\in E\big\}\\
&=\big\{{\sf A}\subset\mathcal B\{\th,\si\}\,\big\vert\,{\sf A}\cap E\ne\emptyset\big\}\\
&=\Delta_\si(E)\,.
\end{aligned}
$$
\end{proof}


\bigskip

M. M\u antoiu:

\smallskip
Departamento de Matem\'aticas, Universidad de Chile, 

Las Palmeras 3425, Casilla 653, Santiago, Chile

\smallskip
E-mail: mantoiu@uchile.cl

\end{document}